\newcommand{\C}{\mathbb C}
\newcommand{\D}{\mathbb D}
\newcommand{\N}{\mathbb N}
\newcommand{\R}{\mathbb R}
\newcommand{\Z}{\mathbb Z}
\newcommand{\cF}{\mathcal F}
\newcommand{\re}{\mathrm{Re}}
\newcommand{\pa}{\partial}
\renewcommand{\a}{\alpha} 
\renewcommand{\b}{\beta} 
\newcommand{\G}{\varGamma}
\newcommand{\De}{\mathit{\Delta}}
\newcommand{\f}{\varphi}
\newcommand{\z}{\zeta}
\newcommand{\la}{\langle}
\newcommand{\ra}{\rangle}
\newcommand{\tr}{\;^t}
\newcommand{\one}{\mathbf{1}}
\newcommand{\zero}{\mathbf{0}}
\newcommand{\comment}[1]{{}}
\newcommand{\wt}[1]{\widetilde{#1}}
\newcommand{\init}{\operatorname{in}}
\newcommand{\rank}{\operatorname{rank}}
\newcommand{\ord}{\operatorname{ord}}
\newcommand{\sing}{\operatorname{Sing}}
\newcommand{\ch}{\operatorname{Ch}}
\newcommand{\pr}{\operatorname{pr}}
\newcommand{\gr}{\operatorname{gr}}
\newcommand{\A}{{\operatorname{A}}}
\newcommand{\red}[1]{\textcolor{red}{#1}}
\newtheorem{theorem}{Theorem}[section]
\newtheorem{proposition}{Proposition}[section]
\newtheorem{lemma}{Lemma}[section]
\newtheorem{fact}{Fact}[section]
\newtheorem{remark}{Remark}[section]
\newtheorem{definition}{Definition}[section]
\newcommand{\HGF}[5]
{{}_{#1}F_{#2}\left(\begin{matrix}#3\\#4\end{matrix};#5\right)}
\title
[A hypergeometric system of rank $p^m$]
{A system of hypergeometric differential equations in $m$ variables 
 of rank $p^m$}
\author[Kaneko J.]{Kaneko Jyoichi}
\address[Kaneko]{
Department of Mathematical Sciences,
University of the Ryukyus,
Nishihara, Okinawa, 903-0213, Japan
}
\email{kaneko@math.u-ryukyu.ac.jp}
\author[Matsumoto K.]{Matsumoto Keiji}
\address[Matsumoto]{
Department of Mathematics\\
Hokkaido University\\
Sapporo 060-0810, Japan
}
\email{matsu@math.sci.hokudai.ac.jp}
\author[Ohara K.]{Ohara Katsuyoshi}
\address[Ohara]{
Faculty of Mathematics and Physics\\
Kanazawa University\\
Kanazawa 920-1192, Japan\\
}
\email{ohara@se.kanazawa-u.ac.jp
%(ohara@air.s.kanazawa-u.ac.jp)
}
\author[Terasoma T.]{Terasoma Tomohide}
\address[Terasoma]{
Faculty of Science and Engineering,
Hosei University,
%Kajino 3-7-2, 
Koganei, Tokyo 184-8584, Japan
}
\email{terasoma@hosei.ac.jp}
\keywords{Hypergeometric functions, Hypergeometric system, 
Monodromy representation}
\subjclass[2020]{33C70, 32S40}
\date{\today}
\begin{document}
\maketitle
\begin{abstract}
We define a hypergeometric series in $m$ variables with $p+(p-1)m$ 
parameters, which reduces to the generalized 
hypergeometric series $_pF_{p-1}$ when $m=1$, and to 
Lauricella's hypergeometric series $F_C$ in $m$ variables when $p=2$. 
We give a system of hypergeometric differential equations annihilating 
the series.  
Under some non-integral conditions on parameters,  
we give an Euler type integral representation of the series, and  
linearly independent $p^m$ solutions to this system around 
a point near to the origin.  
We show that this system is of rank $p^m$, and determine its singular locus.

%and show that it is a regular holonomic system of rank $p^m$. 
% We determine its singular locus, 
% and give $m+1$ loops which generate the fundamental group of the complement.
% %and satisfy relations charcterizing an Artin group of infinite type. 
% We give a fundamental system of solutions to it,  
% %expressed in terms of hypergeometric series 
% and circuit matrices for the loops with respect to this.
\end{abstract}
\tableofcontents

\section{Introduction}
The hypergeometric series is defined by 
$$_2F_1\left(\begin{matrix}a_1,a_2\\ b_1
\end{matrix}
;x\right)=\sum_{n=0}^\infty\frac{(a_1,n)(a_2,n)}{(b_1,n)(1,n)}x^n,$$
where $a_1,a_2,b_1$ are complex parameters with 
$b_1\notin -\N=\{0,-1,-2,\dots\}$, and 
$(a_1,n)$ denotes Pochhammer's symbol, that is  
$$(a_1,n)=a_1(a_1+1)\cdots (a_1+n-1)=\frac{\G(a_1+n)}{\G(a_1)}.$$ 
This series converges for $|x|<1$, admits an Euler type integral 
$$
%_2F_1(a_1,a_2,b_1;x)=
\frac{\G(b_1)}{\G(a_1)\G(b_1-a_1)}
\int_0^1 t^{a_1-1}(1-t)^{b_1-a_1-1}(1-tx)^{-a_2}dt
%\quad (0<\re(a_1)<\re(b_1)),
$$
under $0<\re(a_1)<\re(b_1)$, 
and satisfies the hypergeometric differential equation 
$$(x(a_1+\theta)(a_2+\theta)-\theta(b_1-1+\theta))\cdot f(x)=0,$$
of second order with regular singular points at $x=0,1,\infty$,  
where $f(x)$ is an unknown function and $\theta$ denotes 
the Euler operator $x\frac{d}{dx}$. If the parameter $b_1$ is not an integer, then 
the functions 
$_2F_1\left(\begin{matrix}a_1,a_2\\ b_1\end{matrix}
;x\right)$ and $x^{1-b_1}\;_2F_1\left(\begin{matrix}a_1-b_1+1,a_2-b_1+1\\ 2-b_1
\end{matrix};x\right)$
span the space of solutions to the hypergeometric differential equation 
around a point near to the origin. 
The hypergeometric series and the hypergeometric differential 
equation appear in various fields in  mathematics and physics, 
and play important role. 
%There many kinds of generalizations of them, refer to \cite{AK}, \cite{E}, 

In this paper, we define a hypergeometric series $F_C^{p,m}(a,B;x)$ in 
$m$ variables  $x_1$, $\dots$, $x_m$ with parameters $a=\tr(a_1,\dots,a_p)$ 
and  $B=(b_{i,j})_{\substack{{1\le i\le p}\\ {1 \le j\le m}}}$ assigning $b_{p,k}=1$ 
 $(1\le k\le m)$ as in \eqref{eq:HGS}. 
This series reduces to the generalized hypergeometric series 
$$_{p}F_{p-1}\left(\begin{matrix}a_1,\dots,a_p\\
 b_{1,1}\dots,b_{p-1,1}\end{matrix};x_1\right)
=\sum_{n=0}^\infty 
\frac{(a_1,n)\cdots (a_p,n)}{(b_{1,1},n)\cdots (b_{p-1,1},n)(1,n)}x_1^n
$$ when $m=1$, and to Lauricella's hypergeometric series 
\begin{align*}
&F_C(a_1,a_2,b_{1,1},\dots,b_{1,m};x_1,\dots,x_m)\\
=&\sum_{(n_1,\dots,n_m)\in \N^m}%{=(0,\dots,0)}^{(\infty,\dots,\infty)} 
\frac{(a_1,n_1+\cdots+n_m) (a_2,n_1+\cdots+n_m)}
{(b_{1,1},n_1)\cdots (b_{1,m},n_m)(1,n_1)\cdots (1,n_m)}
x_1^{n_1}\cdots x_m^{n_m}
\end{align*}
in $m$ variables $x_1,\dots,x_m$  when $p=2$, where $\N=\{0,1,2,\dots\}$. 
We can regard 
$F_C^{p,m}(a,B;x)$ as one of multi-variable models of the generalized 
hypergeometric series $_pF_{p-1}$ just like Lauricella's $F_C$ is 
that of the hypergeometric series $_2F_1$. 
We also remark that $F_C^{p,m}(a,B;x)$ for $m=2$ coincides with 
one of the generalizations of the hypergeometric series introduced by Kamp\'e
de F\'eriet  as mentioned in \cite[\S XLVII]{AK} %[1, ¡øXLVIII] 
and \cite[\S 1.5]{Ex}, %[6, Sec. 1.5]. 
and there are detailed studies on $F_C^{p,m}(a,B;x)$ 
%and the system $\cF_C^{p,m}(a,B)$ 
for $(p,m)=(3, 2)$ in \cite{KMO1} and \cite{KMO2}.

We show that the series $F_C^{p,m}(a,B;x)$ converges absolutely on any point in 
$$\D=\{x\in \C^m\mid \sqrt[p]{|x_1|}+\cdots+\sqrt[p]{|x_m|}<1\}.$$
%by modifying the proof of \cite[Theorem 8.1]{Kim}. 
Under some non-integral conditions on the parameters $a,B$,  
we give an Euler type $(p-1)\cdot m$-ple integral for $F_C^{p,m}(a,B;x)$ 
in Theorem \ref{th:int-rep}, which can be regarded as 
a generalization of Euler type $(p-1)$-ple and $m$-ple integrals 
for the generalized hypergeometric series $_{p}F_{p-1}$ and 
for Lauricella's $F_C$, respectively, refer to  \cite[(4.1.3)]{Sl} and 
\cite[Proposition 2.3]{G1} for their explicit forms. 

We find differential operators 
\begin{align*}
\ell_k=&\theta_k(b_{1,k}-1+\theta_k)\cdots (b_{p-1,k}-1+\theta_k)\\
&-x_k(a_{1}+\theta_1+\cdots+\theta_m)\cdots (a_{p}+\theta_1+\cdots+\theta_m)
\quad (k=1,\dots,m)
\end{align*}
annihilating $F_C^{p,m}(a,B;x)$, and define a system 
$\cF_C^{p,m}(a,B)$ of differential equations generated by them, 
where $\theta_k$ denotes the Euler operator $x_k\pa_k$, where 
$\pa_k=\frac{\pa}{\pa x_k}$.

Under some non-integral conditions on the parameters $a,B$, 
we give linearly independent $p^m$ multi-valued solutions of a form
$$
\big(\prod_{k=1}^{m} x_k^{\mu_k}\big)\cdot F_C^{p,m}(a',B';x),
$$ 
around the origin in Proposition \ref{prop:fund-solutions}.
Here $\mu_k=1-b_{j_k,k}$ $(1\le j_k\le p)$ and 
the entries of $a',B'$ are expressed in terms of those of the original $a,B$.   
% By a standard method for Euler type integral,
% this series have an integral expression using the complex power of
% $t_{jk}$ $(1\leq j\leq p-1,1\leq k \leq m )$, $Q=Q(t,x)$ and 
% $L_j(t)$ $(1\leq j\leq p-1)$ defined by
% \begin{eqnarray*}
% Q(t,x)&=&
% 1-\frac{x_1}{t_{1,1}\cdots t_{p-1,1}}-\cdots 
% -\frac{x_m}{t_{1,m}\cdots t_{p-1,m}},\\
% L_j(t)&=&1-t_{j,1}-\cdots-t_{j,m},
% \end{eqnarray*}
% where $t=(t_{jk})_{jk}$ are the variables for integration.
% ****************
% ****************
% In Section 4, we obtain a system of linear differential equations 
% $$
% \ell_i\cdot F_C^{p,m}(a,B;x)=0\quad  (1\leq i \leq m).
% $$
% satisfied by the series $F_C^{p,m}(a,B;x)$ 
%defined in Section \ref{sec:series}.
% In Section 5, we study the rank of solution space and the singularity
% of the system of differential equation using the theory of $D$-modules.
% Let $\ell_1,\dots, \ell_m$ be rational differential operators 
% introduced in Section 1.
In Theorem \ref{th:rank},
we prove that the system $\cF_C^{p,m}(a,B)$ 
is of rank $p^m$, as a consequence, the above $p^m$-independent
solutions actually form a basis of the space of solutions.

We state in Theorem \ref{th:sing-loc} that the singular locus is
given by
$$
S(x)=\{x\in \C^m\mid x_1\cdots x_m \cdot R(x)=0\},
$$
where $R(x)$ is a polynomial in $x_1,\dots,x_m$ of degree $p^{m-1}$
defined in \eqref{eq:R(x)}.  
% Note that the right hand side of
% \eqref{first definitio of singular locus}
% is invariant under the action of 
% $
% (z_i)_i\mapsto (\zeta_p^{i_i}z_i)_i
% $
% for $(i_1, \dots, i_m)\in \Z_p^m$, and is a polynomila of $x_1, \dots, x_m$.
%To consider the rank and the singularity, 
In the paper [HT], they show this theorem for $p=2$. 
To prove these theorems, 
we consider the left 
ideal $\mathcal I$ of the Weyl algebra 
$D_m=\C\langle x_1, \dots, x_m,\pa_1,\dots, \pa_m\rangle$ 
generated by 
$\ell_1,\dots,\ell_m$.
To study the rank and singular locus
in $(\C^\times)^m=\{(x_1, \dots, x_m)\mid x_1\cdots x_m\neq 0\}$,
it is convenient to consider the pull back system
by the unramified covering 
%$\f:{\C^{\times}}^m\to{\C^{\times}}^m$ defined by
$$
\f:{(\C^\times)}^m\!\ni\! z\!=\!(z_1, \dots, z_m)\mapsto 
x\!=\!(x_1, \dots, x_m)\!=\!(z_1^p,\dots, z_m^p)
\!\in\!{(\C^\times)}^m.
$$
%The map $\f$ induces a morphism from 
We set $D_m^*=D_m[1/x_1,\dots,1/x_m]$ and 
$\widetilde{D}_m^*=\widetilde{D}_m[1/z_1,\dots,1/z_m]$,
where 
$\widetilde{D}_m=\C\langle z_1, \dots, z_m,
\wt{\pa_1},\dots, \wt{\pa_m}\rangle$,
$\wt{\pa_k}=\frac{\pa}{\pa z_k}$.
Then $\f$ induces a homomorphism 
$\f^*:\C[x_1^{\pm},\dots,x_m^{\pm}] \to \C[z_1^{\pm},\dots,z_m^{\pm}]$ and 
$\f^*:D_m^* \to \widetilde{D}_m^*$. 
The ideal of $\widetilde{D}_m^*$ generated
by $\f^*(\ell_i)$ ($i=1,\dots,m$) is denoted by $\widetilde{\mathfrak I}$.
Under this change of variables, the rank of the system 
is preserved and the singularities with respect to $z$ are
pull back of those with respect to $x$.
The pull back of the defining polynomial $R(x)$ is the product of 
$p^m$ linear forms:
\begin{equation}
\label{first definitio of singular locus}
R(z)=\prod_{(i_1,\dots,i_m)\in (\Z_p)^m}
(1-\z_p^{i_1}z_1-\cdots -\z_p^{i_m}z_m)
\end{equation}
in $z_1,\dots,z_m$, where $\z_p=\exp(2\pi\sqrt{-1}/p)$ and 
$\Z_p=\Z/(p\Z)=\{1,\dots,p\}$.
Note that the right hand side of
\eqref{first definitio of singular locus}
is invariant under the action of 
$
(z_i)_i\mapsto (\zeta_p^{i_i}z_i)_i
$
for $(i_1, \dots, i_m)\in \Z_p^m$, and it can be written as
a polynomial of $x_1, \dots, x_m$.
Let $\widetilde{\xi_i}$ be the symbol of 
the differential operator 
$\wt\pa _i$ and 
$\init(\widetilde{\mathfrak I})$ be the ideal  
of $\C[z_i^{\pm},\widetilde{\xi}_i]$
consisting of the symbols of $\widetilde{\mathfrak I}$. 
Then the rank and the singular locus are computed from the ideal $\init(\widetilde{\mathfrak I})$.
We consider 
the ideal $\widetilde{\mathcal I}^*$ of
$\C[z_i^{\pm},\xi_i]$ generated by the symbols
$\widetilde{L_i}$ of $\wt\ell_i=\f^*(\ell_i)$, 
Then we have $\widetilde{\mathcal I}^* \subset 
\init(\widetilde{\mathfrak I})$.
To study
the rank and the singular locus of the system of
differential equations,
we show that 
$A[\xi_1,\dots,\xi_m]/\widetilde{\mathcal I}^*$
%(\widetilde{L_1},\dots, \widetilde{L_m})$ 
is a locally free 
$A$-module of rank $p^m$,
where 
$
A=\C[z_i^{\pm},1/R(z)].
$
To see this property, we change generators 
$\widetilde{L_1},\dots, \widetilde{L_m}$
of the ideal 
$\widetilde{\mathcal I}^*$
 by
$\widetilde{M_1},\dots, \widetilde{M_m}$ and show that
$\widetilde{M_1},\dots, \widetilde{M_m}$ form a regular sequence in the ring $A[\widetilde{\xi_i}]_i$.
By computing the Hilbert polynomial with respect to this 
regular sequence, we conclude that
$A[\xi_1,\dots,\xi_m]/(\widetilde{L_1},\dots, \widetilde{L_m})$
is a locally free module of rank $p^m$. 
As is pointed out in the paper \cite{HT}, the ideal 
$\widetilde{\mathcal I}^*$ can be different from
$\init(\widetilde{\mathfrak I})$.
To show that the singular locus
is given by $R(x)=0$, we use $p^m$ independent solutions.

In the forthcoming paper \cite{KMOT}, 
we study the fundamental group of the complement $X$ of 
the singular locus $S(x)$ of the system $\cF_{C}^{p,m}(a,B)$ and  
the monodromy representation of $\cF_{C}^{p,m}(a,B)$.
We show that the fundamental group is generated by $m+1$ loops 
$\rho_0,\rho_1,\dots,\rho_m$  in $X$, and that they satisfy relations 
$$\rho_i\rho_j=\rho_j\rho_i\ (1\le i<j\le m),\quad 
(\rho_0\rho_k)^p=(\rho_k\rho_0)^p\ (1\le k\le m),$$
as elements of the fundamental group. 
We give circuit matrices along $\rho_k$ $(0\le k\le m)$ with respect to 
a fundamental system of solutions to $\cF_C^{p,m}(a,B)$ around 
a point near to the origin. 

After finishing our work, we noticed that in the paper
\cite{MO}, they also treat the same systems of differential equations,
or even more general ones, and obtain the singular loci and 
the ranks of them. 
They compute the common zeros of the ideal 
$\widetilde{\mathcal I}^*$ and continue the argument
using GKZ machinery, which yields the required results
on singularities and ranks.
On the other hand, we directly prove that
$A[\xi_1,\dots,\xi_m]/\widetilde{\mathcal I}^*$
is a locally free module of rank $p^m$
on the set $U=\{R(x)\neq 0\}$
by regular sequence argument.
This method gives an information on the structure of solutions
over $U$.
We hope that our method has an advantage for the study of
the Pfaffian form for this system. 
 
\section{Hypergeometric series $F_C^{p,m}(a,B;x)$}
\label{sec:series}
Let $m$ and $p$ be natural numbers with $m\ge 1$ and $p\ge 2$. 
We put 
$$a=\begin{pmatrix}
a_1\\\vdots\\ a_p
\end{pmatrix},\quad 
B=(b_1,\dots,b_{m})=
\begin{pmatrix}
b_{1,1} &  & b_{1,m}\\
\vdots & \cdots & \vdots\\
b_{p-1,1} &  & b_{p-1,m}\\
1 & & 1
\end{pmatrix}
$$
for complex parameters $a_i$ ($1\le i\le p$), and  
$b_{j,k}$ ($1\le j\le p-1$, $1\le k\le m$) 
with $b_{j,k}\notin -\N=\{0,-1,-2,\dots\}$.
We assume that the last entry $b_{p,k}$ of every column vector 
$b_k$ is $1$.
%The last row of the $(p,k)$ matrix $B$ is $(1,\dots,1)$.

\begin{definition}
\label{def:HGS}
We define a hypergeometric series in $m$ variables $x_1,\dots,x_m$ 
with parameters $a$ and $B$ as 
\begin{eqnarray}
\label{eq:HGS}
& &F_C^{p,m}(a,B;x)=F_C^{p,m}\left(
\begin{pmatrix} a_1 \\
\vdots \\
a_p
\end{pmatrix},
\begin{pmatrix}
b_{1,1} &  & b_{1,m}\\
\vdots & \cdots & \vdots\\
b_{p,1} &  & b_{p,m}\\
\end{pmatrix};
\begin{pmatrix} x_1 \\
\vdots \\
x_m
\end{pmatrix}
\right)\\ \nonumber
&=&
%\sum_{n_1=0}^\infty\cdots \sum_{n_m=0}^\infty
\sum_{(n_1,\dots,n_m)\in \N^m}
\frac{(a_1,n_1+\cdots +n_m)\cdots (a_p,n_1+\cdots +n_m)}
{\prod_{k=1}^{m}\{(b_{1,k},n_k)\cdots (b_{p-1,k},n_k)n_k!\}}
x_1^{n_1}\cdots x_m^{n_m}.
\end{eqnarray}
% index of $a$ is $i=1,\dots,p$, 
% row number of $b$ is $j=1,\dots,p-1
% column number of $b$ and index of $x$ are $k=1,\dots,m$.
% \red{
% Here we use Pochhammer's symbol
% $$
% (b_{1,k},n_k)=b_{1,k}\cdot(b_{1,k}+1)\cdots(b_{1,k}+n_k-1)
% =\frac{\G(b_{1,k}+n_k)}{\G(b_{1,k})}.
% $$
% }
\end{definition}
Note that $F_C^{p,m}(a,B;x)$ reduces to 
the generalized hypergeometric series  
$\HGF{p}{p-1}{a_1,\dots,a_p}{b_{1,k},\dots,b_{p-1,k}}{x_k}$ 
when it is restricted to $x_1=\cdots=x_{k-1}=x_{k+1}=\cdots=x_m=0$, and that 
it is Lauricella's hypergeometric series 
$F_C(a_1,a_2,b_{1,1},\dots,b_{m,1};x_1,\dots,x_m)$  
in $m$-variables $x_1,\dots,x_m$ when $p=2$. 
The series $F_C^{p,m}(a,B;x)$ can be regarded as one of multi-variable 
models of the generalized hypergeometric series $_{p}F_{p-1}$ just like 
Lauricella's hypergeometric series $F_C$
is that of the hypergeometric series $_{2}F_{1}$.

\begin{proposition}
\label{prop:converge}
This series absolutely converges in the domain 
$$\D=\{x\in \C^m\mid\sqrt[p]{|x_1|}+\cdots+\sqrt[p]{|x_m|}<1\}.$$
\end{proposition}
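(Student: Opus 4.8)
The plan is to bound the general term of \eqref{eq:HGS} by that of a multinomial series and then apply an elementary power--mean inequality; the $p$-th roots appearing in $\D$ come out precisely from this inequality. Write $N=n_1+\cdots+n_m$ and let $c_n$ denote the coefficient of $x_1^{n_1}\cdots x_m^{n_m}$ in \eqref{eq:HGS}.

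First I would record one--sided Gamma estimates in each direction. From the standard asymptotics $(a,N)=\frac{\G(N+a)}{\G(a)}=O\!\big(N!\,N^{\re(a)-1}\big)$ (which is trivial when $a\in-\N$, since then $(a,N)$ is eventually $0$), there are constants $C_i>0$ and integers $D_i\ge 0$ with $|(a_i,N)|\le C_i(1+N)^{D_i}N!$ for all $N\in\N$. On the other hand, since $b_{j,k}\notin-\N$ the symbol $(b_{j,k},n_k)=b_{j,k}(b_{j,k}+1)\cdots(b_{j,k}+n_k-1)$ is nonzero for every $n_k\in\N$, and the asymptotics $\left|\frac{\G(n_k+b_{j,k})}{\G(n_k+1)}\right|\sim n_k^{\re(b_{j,k})-1}$ give constants $c_{j,k}>0$ and integers $E_{j,k}\ge 0$ with $|(b_{j,k},n_k)|\ge c_{j,k}(1+n_k)^{-E_{j,k}}n_k!$ for all $n_k\in\N$. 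Multiplying the first bound over $1\le i\le p$ and the second over $1\le j\le p-1$, $1\le k\le m$, dividing, and using $n_k\le N$ to absorb $\prod_k(1+n_k)^{E_{j,k}}$ into a power of $1+N$, I obtain $C>0$ and an integer $D\ge 0$, independent of $n$, with
$$|c_n|\le C\,(1+N)^{D}\left(\frac{N!}{n_1!\cdots n_m!}\right)^{p}.$$

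Next, fix $x\in\D$, put $r_k=|x_k|$ and $\rho=\sqrt[p]{r_1}+\cdots+\sqrt[p]{r_m}<1$, and group $\sum_n|c_n|\,|x_1|^{n_1}\cdots|x_m|^{n_m}$ according to $N$. For fixed $N$ the sum over $\{n\in\N^m:\ n_1+\cdots+n_m=N\}$ is finite, so the inequality $\sum_{\a}t_{\a}^{\,p}\le\big(\sum_{\a}t_{\a}\big)^{p}$ (valid for finitely many $t_{\a}\ge 0$ and $p\ge 1$) applies to $t_n=\frac{N!}{n_1!\cdots n_m!}(\sqrt[p]{r_1})^{n_1}\cdots(\sqrt[p]{r_m})^{n_m}$; since $\sum_{n_1+\cdots+n_m=N}t_n=\rho^{N}$ by the multinomial theorem, we get
$$\sum_{\substack{n\in\N^m\\ n_1+\cdots+n_m=N}}\left(\frac{N!}{n_1!\cdots n_m!}\right)^{p}r_1^{n_1}\cdots r_m^{n_m}\le\rho^{pN}.$$
Choosing $\rho<\rho'<1$, the factor $(1+N)^{D}(\rho/\rho')^{pN}$ is bounded by some $M<\infty$, whence
$$\sum_{n\in\N^m}|c_n|\,|x_1|^{n_1}\cdots|x_m|^{n_m}\le C\sum_{N=0}^{\infty}(1+N)^{D}\rho^{pN}\le CM\sum_{N=0}^{\infty}(\rho')^{pN}<\infty,$$
which is the desired absolute convergence on $\D$.

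The only point needing real care is the uniform lower bound $|(b_{j,k},n_k)|\ge c_{j,k}(1+n_k)^{-E_{j,k}}n_k!$, which genuinely uses $b_{j,k}\notin-\N$ (so the symbol never vanishes, including for small $n_k$) together with uniformity of the Gamma asymptotics; everything else is routine bookkeeping, and the convergence itself is controlled by the power--mean inequality above, which is exactly what produces the $p$-th roots in the definition of $\D$.
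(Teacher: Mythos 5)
Your proof is correct and follows essentially the same route as the paper's: both bound the coefficient by a constant times a polynomial in $N=n_1+\cdots+n_m$ times $\bigl(\frac{N!}{n_1!\cdots n_m!}\bigr)^p$ via Gamma asymptotics, and both then use the inequality $\sum_\a t_\a^p\le\bigl(\sum_\a t_\a\bigr)^p$ together with the multinomial theorem to produce the factor $(\sqrt[p]{|x_1|}+\cdots+\sqrt[p]{|x_m|})^{pN}$. Your treatment of the one-sided lower bound on $(b_{j,k},n_k)$ is slightly more explicit than the paper's, but the argument is the same.
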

\begin{proof}
% By Stirling's formula 
% $$\G(z)\sim \sqrt{2\pi} z^{z-1/2} e^{-z} \quad 
% (z \to \infty,\ \red{\re(z)}\ge c>0),$$

We modify the proof of \cite[Theorem 8.1]{Kim}.  
By the formula 
$$\frac{1}{\G(s)} =\lim_{N\to\infty} \frac{(s,N)}{(N-1)!N^s},$$
we have 
$$(a_i,n_i)\sim \frac{(n_i-1)! n_i^{a_i}}{\G(a_i)}=
\frac{n_i! n_i^{a_i-1}}{\G(a_i)}
\quad (\N\ni n_i\to \infty).
$$
Since
\begin{eqnarray*}
& &A_{n}=\frac
{(a_1,n_1+\cdots+n_m)\cdots (a_p,n_1+\cdots+n_m)}
{\prod_{k=1}^m\{(b_{1,k},n_k)\cdots (b_{p-1,k},n_k)n_k!\}}\\
&\sim&
\frac{\prod_{\substack{1\le j\le p-1\\1\le k\le m}} 
\G(b_{j,k})}{\G(a_1)\cdots\G(a_p)}
\cdot
\frac{(n_1\!+\!\cdots\!+\!n_m)^{a_1+\cdots+a_p-p}}
{\prod_{k=1}^m n_k^{b_{1,k}+\cdots+b_{p-1,k}-p+1}}\cdot
\left(\frac{(n_1\!+\!\cdots\!+\!n_m)!}{n_1!\cdots n_m!}
\right)^p,
\end{eqnarray*}
as $n_1,\dots, n_m\to \infty$, and 
$$\prod_{k=1}^m n_k^{p-1-b_{1,k}-\cdots-b_{p-1,k}}
\le n_1^{\b}\cdots n_m^{\b}\le 
\frac{(n_1+\cdots +n_m)^{m\b}}{m^{m\b}},$$
for a positive constant $\b\ge \max\limits_{1\le k\le m}
(\re(p-1-b_{1,k}-\cdots-b_{p-1,k}))$, 
there exists a positive constant $C$ such that 
\begin{eqnarray*}
& &|A_{n}x_1^{n_1}\cdots x_m^{n_m}|\\
&\le& C\cdot
(n_1\!+\! \cdots\! +\! n_m)^{\a}\cdot
\left(\frac{(n_1\!+\!\cdots \!+\! n_m)!}{n_1!\cdots n_m!}
\right)^p\cdot|x_1|^{n_1}\cdots |x_m|^{n_m},
\end{eqnarray*}
where $\a=\re(a_1+\cdots+a_p+m\b-p)$.
Thus we have
\begin{eqnarray*}
& &%\sum_{n_1=0}^\infty\cdots \sum_{n_m=0}^\infty
\sum_{n\in \N^m}
|A_{n}x_1^{n_1}\cdots x_m^{n_m}|
=\sum_{N=0}^\infty\sum_{n_1+\cdots+n_m=N}|A_{n}x_1^{n_1}\cdots x_m^{n_m}|\\
&\le&
C\sum_{N=0}^\infty N^{\a}\sum_{n_1+\cdots +n_m=N}
\left(\frac{(n_1+\cdots+n_m)!}{n_1!\cdots n_m!}
\right)^p|x_1|^{n_1}\cdots |x_m|^{n_m}\\
&\le&
C\sum_{N=0}^\infty N^{\a}
\left(\sum_{n_1+\cdots +n_m=N}
\frac{(n_1+\cdots+n_m)!}{n_1!\cdots n_m!}
|x_1|^{\frac{n_1}{p}}\cdots |x_m|^{\frac{n_m}{p}}\right)^p\\
&=&
C\sum_{N=0}^\infty 
N^{\a}(\sqrt[p]{|x_1|}+\cdots+\sqrt[p]{|x_m|})^{pN}.
\end{eqnarray*}
Hence 
%the series  
$F_C^{p,m}(a,B;x)$ absolutely converges if $x$ satisfies the 
inequality $\sqrt[p]{|x_1|}+\cdots+\sqrt[p]{|x_m|}<1$. 
\end{proof}

\begin{remark}
\label{rem:diverge}
We can show that 
$\{A_{n}x_1^{n_1}\cdots x_m^{n_m}\mid n_1,\dots,n_m\in \N\}$
is not bounded for $\sqrt[p]{|x_1|}+\cdots+\sqrt[p]{|x_m|}>1$. 
Hence $F_C^{p,m}(a,B;x)$ diverges if $x$ belongs to the exterior of $\D$.
% we need the opposite estimations, for example 
% there exists a constant $C$ such that $a^3+...+b^3\ge C\times (a+...+b)^3$ 
\end{remark}

\section{An Euler type  Integral of $F_C^{p,m}(a,B;x)$ }

\begin{theorem}
\label{th:int-rep}
Suppose that 
$$%a_p\notin \Z,\quad
a_j,\  
 b_{j,1},\dots,b_{j,m},\ a_j- b_{j,1}-\cdots-b_{j,m}\notin \Z\quad 
(1\le j\le p-1).
$$
The hypergeometric series $F_C^{p,m}(a,B;x)$ admits 
the integral representation of Euler type:
$$
c_\G \!\int_{\De_1\times\cdots\times\De_{p-1}}\!\!
\Big(\prod_{\substack{1\le j\le p-1\\1\le k\le m}} 
t_{j,k}^{-b_{j,k}}\Big)\cdot 
Q(t,x)^{-a_p}\cdot 
\prod_{j=1}^{p-1} L_j(t)^{b_{j,1}+\cdots+b_{j,m}-a_j-m}
%L_1(t)^{b_{1,1}+\cdots+b_{1,m}-a_1-m}
%\cdots 
%L_{p-1}(t)^{b_{1,1}+\cdots+b_{1,m}-a_{p-1}-m}
\cdot dt, 
$$
where $\De_j$ $(1\le j\le p-1)$ are regularized twisted cycles 
associated to the $m$-simplex
$$\{(t_{j,1},\dots,t_{j,m})\in \R^m\mid 
0\le t_{j,1},\dots,t_{j,m},\ t_{j,1}+\cdots+t_{j,m}\le 1\}$$ 
with exponents $1-b_{j,1},\dots,1-b_{j,m}$, 
$1+b_{j,1}+\cdots+b_{j,m}-a_j-m$ as in \cite[\S4]{G1} for the case $p=2$ and  
in \cite[Theorem 3.1]{KMO1} for $(p,m)=(3,2)$, 
\begin{eqnarray*}
Q(t,x)&=&
1-\frac{x_1}{t_{1,1}\cdots t_{p-1,1}}-\cdots 
-\frac{x_m}{t_{1,m}\cdots t_{p-1,m}},\\
L_j(t)&=&1-t_{j,1}-\cdots-t_{j,m},\\
dt&=&\bigwedge_{j=1}^{p-1} dt_j=\bigwedge_{j=1}^{p-1} 
(dt_{j,1}\wedge \cdots \wedge dt_{j,m}),
\end{eqnarray*}
and the gamma factor $c_\G$ is 
$$
\prod_{j=1}^{p-1}\frac{\G(1-a_j)}
{\G(1-b_{j,1})\cdots \G(1-b_{j,m})
\G(1+b_{j,1}+\cdots+b_{j,m}-a_j-m)}.
$$
% $$\frac{\G(1-a_1)\G(1-a_2)}{\G(1\!-\! b_1)\G(1\!-\! b_3)
% \G(b_1\!+\! b_3\!-\! a_1\!-\! 1)
% \G(1\!-\! b_2)\G(1\!-\! b_4)\G(b_2\!+\!b_4\!-\! a_2\!-\!1)}.
% $$
\end{theorem}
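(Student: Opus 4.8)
The plan is to establish the integral representation by expanding the integrand into a multiple power series in $x_1,\dots,x_m$ and integrating term by term, matching the result with the defining series $\eqref{eq:HGS}$. The first step is to expand $Q(t,x)^{-a_p}$ using the binomial series
$$
Q(t,x)^{-a_p}=\sum_{(n_1,\dots,n_m)\in\N^m}
\frac{(a_p,n_1+\cdots+n_m)}{n_1!\cdots n_m!}
\prod_{k=1}^m\Big(\frac{x_k}{t_{1,k}\cdots t_{p-1,k}}\Big)^{n_k},
$$
which is valid on a suitable neighbourhood of the origin in $x$ and on the relevant chains; here one uses that for $|x|$ small the quantity $\tfrac{x_1}{t_{1,1}\cdots t_{p-1,1}}+\cdots$ stays inside the unit disc on the (regularized) cycle, after the usual care near the coordinate hyperplanes where the regularization of $\De_j$ is designed precisely to make the improper integral converge. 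Substituting this expansion, the integral factorizes over the indices $j=1,\dots,p-1$, so that the coefficient of $x_1^{n_1}\cdots x_m^{n_m}$ becomes $\tfrac{(a_p,n_1+\cdots+n_m)}{n_1!\cdots n_m!}$ times a product over $j$ of integrals of the shape
$$
\int_{\De_j}\Big(\prod_{k=1}^m t_{j,k}^{-b_{j,k}-n_k}\Big)\,
L_j(t)^{b_{j,1}+\cdots+b_{j,m}-a_j-m}\,dt_j.
$$

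The second step is to evaluate each such integral as a (regularized) Selberg–Dirichlet type integral over the $m$-simplex. The standard Dirichlet integral formula gives, for parameters with positive real parts,
$$
\int_{\De}\prod_{k=1}^m s_k^{c_k-1}\,(1-s_1-\cdots-s_m)^{c_0-1}\,ds
=\frac{\G(c_0)\G(c_1)\cdots\G(c_m)}{\G(c_0+c_1+\cdots+c_m)};
$$
applying this with $c_k=1-b_{j,k}-n_k$ and $c_0=1+b_{j,1}+\cdots+b_{j,m}-a_j-m$, the sum $c_0+c_1+\cdots+c_m$ equals $1-a_j-(n_1+\cdots+n_m)$, and the $j$-th factor becomes
$$
\frac{\G(1+b_{j,1}+\cdots+b_{j,m}-a_j-m)\,\prod_{k=1}^m\G(1-b_{j,k}-n_k)}
{\G(1-a_j-(n_1+\cdots+n_m))}.
$$
Using $\G(1-b_{j,k}-n_k)=\G(1-b_{j,k})/(b_{j,k},n_k)\cdot(-1)^{n_k}$, wait—more precisely the reflection identity $\G(1-b_{j,k}-n_k)=(-1)^{n_k}\G(1-b_{j,k})/(b_{j,k})_{n_k}$ and likewise $\G(1-a_j-N)=(-1)^N\G(1-a_j)/(a_j)_N$ with $N=n_1+\cdots+n_m$, the signs $(-1)^{n_1+\cdots+n_m}$ in numerator and denominator cancel, and the $j$-th factor collapses to
$$
\frac{\G(1+b_{j,1}+\cdots+b_{j,m}-a_j-m)\,\prod_k\G(1-b_{j,k})}{\G(1-a_j)}
\cdot\frac{(a_j)_N}{\prod_k (b_{j,k})_{n_k}}.
$$
Multiplying over $j=1,\dots,p-1$, the $\G$-prefactors are exactly the reciprocal of $c_\G$, so the $c_\G$ outside cancels them, and the surviving Pochhammer factors assemble into $\prod_{j=1}^{p-1}(a_j)_N$ in the numerator and $\prod_{j,k}(b_{j,k})_{n_k}$ in the denominator. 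Combined with the $(a_p)_N/(n_1!\cdots n_m!)$ from the binomial expansion, this reproduces the general coefficient $A_n$ of $F_C^{p,m}(a,B;x)$ in $\eqref{eq:HGS}$, completing the identification on the common domain of convergence.

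The third step is to make the formal manipulations rigorous when some of $a_j,b_{j,k}$ have real parts that are not in the convergence range of the classical Dirichlet integral: this is where the regularized twisted cycles $\De_j$ enter, and this is the main obstacle. The point is that $\int_{\De_j}$ is defined as a twisted homology pairing, and the twisted Dirichlet integral formula continues to hold with the $\G$-factors interpreted via analytic continuation, provided the exponents avoid integers — which is exactly the hypothesis $a_j,b_{j,1},\dots,b_{j,m},a_j-b_{j,1}-\cdots-b_{j,m}\notin\Z$. I would cite the regularization procedure for twisted cycles associated to simplices (as in \cite[\S4]{G1} for $p=2$ and \cite[Theorem 3.1]{KMO1} for $(p,m)=(3,2)$), and observe that the only new feature here, namely the product of $p-1$ independent simplex cycles, introduces no interaction: the integrand splits as a product $\prod_j(\text{function of }t_j)$ times $Q(t,x)^{-a_p}$, and after the binomial expansion each $t_j$-integral is an independent regularized Dirichlet integral, so the product cycle $\De_1\times\cdots\times\De_{p-1}$ pairs factor-by-factor. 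Term-by-term integration is then justified by the absolute convergence established in Proposition \ref{prop:converge} together with uniform estimates for the regularized integrals on compact subsets of $\D$ (shrunk near the origin), and the identity of holomorphic functions extends from the small polydisc to all of $\D$ by analytic continuation. Finally, once equality is proved for parameters in general position, the stated formula holds wherever both sides are defined by continuity in the parameters.
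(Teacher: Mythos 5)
Your proposal is correct and follows essentially the same route as the paper's proof: expand $Q(t,x)^{-a_p}$ by the multinomial series, interchange summation and integration, evaluate each $t_j$-integral as a (regularized) Dirichlet integral over the $m$-simplex, and convert the resulting gamma factors into Pochhammer symbols via the reflection formula so that $c_\G$ cancels the constant prefactors. Your third step even spells out the justification of the regularization and term-by-term integration slightly more explicitly than the paper does.
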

% \begin{figure}
% \input cycle.tex
% \caption{Twisted cycles $\De_i$ ($i=1,2$, $\{i,j\}=\{1,2\}$)}
% \label{fig:cycle}
%\end{figure}

\begin{proof}
Put $T_k=t_{1,k}\cdots t_{p-1,k}$ for $k=1,\dots,m$, 
and expand the factor $Q(t,x)^{-a_p}$. 
Then we have 
\begin{eqnarray*}
& &\Big(1-\frac{x_1}{T_1}
-\cdots-\frac{x_m}{T_{m}}\Big)^{-a_p}
=\sum_{N=0}^{\infty} \frac{(a_p,N)}{N!}
\Big(\frac{x_1}{T_1}
+\cdots+\frac{x_m}{T_m}\Big)^N\\
&=&\sum_{N=0}^{\infty} \frac{(a_p,N)}{N!}
\sum_{n_1+\cdots+n_m=N}\frac{N!}{n_1!\cdots n_m!}
\Big(\frac{x_1}{T_1}\Big)^{n_1}
\cdots \Big(\frac{x_m}{T_m}\Big)^{n_m}\\
&=&\sum_{n_1,\dots,n_m=0}^\infty \frac{(a_p,n_1+\cdots+n_m)}{n_1!\cdots n_m!}
(T_1^{-n_1}\cdots T_m^{-n_m})\cdot(x_1^{n_1}\cdots x_m^{n_m}).
\end{eqnarray*}
Change the order of the summation and the integration.
The coefficient of $x_1^{n_1}\cdots x_m^{n_m}$ in the integration 
(without the gamma factor $c_\G$) is the product of 
\begin{eqnarray*}
& &\int_{\De_j}t_{j,1}^{-b_{j,1}-n_1}\cdots 
t_{j,m}^{-b_{j,m}-n_m}(1\!-\! t_{j,1}\!-\!\cdots\! -\! t_{j,m})^{b_{j,1}+\cdots
+b_{j+m}-a_j-m}
dt_j\\
&=&
\frac{\G(1\!-\! b_{j,1}\!-\! n_1)\cdots \G(1\!-\! b_{j,m}\!-\!n_m)}
{\G(1-a_j-n_1-\cdots-n_m)}
\G(1\!+\!b_{j,1}\!+\!\cdots\!+\!b_{j,m}\!-\!a_j\!-\!m)
\end{eqnarray*}
for $1\le j\le p-1$. 
% \begin{eqnarray*}
% & &\int_{\De_2}t_2^{-b_2-n_1}t_4^{-b_4-n_2}(1-t_2-t_4)^{b_2+b_4-a_2-2}
% dt_2\wedge dt_4\\
% &=&
% \frac{\G(1-b_2-n_1)\G(1-b_4-n_2)\G(b_2+b_4-a_2-1)}
% {\G(1-a_2-n_1-n_2)}.
% \end{eqnarray*}
Here note that we use 
the regularized twisted cycle $\De_j$ to make the integral converge, 
since the real parts of $1-b_{j,1}-n_1$,\dots, $1-b_{j,m}-n_m$ 
are negative for sufficiently large $n_1,\dots,n_m$.
By using the formulas 
%$\G(\a+1)=\a\G(\a)$, 
$(\a,s)\G(\a)=\G(\a+s)$, and  
$\G(\a)\G(1-\a)=\dfrac{\pi}{\sin(\pi\a)}$, 
we transform $\G(1-b_{j,k}-n_k)$ into
\begin{eqnarray*}
& &\G(1-b_{j,k}-n_k)=\frac{\pi}{ \sin(\pi(b_{j,k}+n_k))\G(b_{j,k}+n_k)}\\
&=&\frac{\pi}{(-1)^{n_k}\sin(\pi b_{j,k})\G(b_{j,k})(b_{j,k},n_k)}
=\frac{\G(1-b_{j,k})}{(-1)^{n_k}(b_{j,k},n_k)}.
\end{eqnarray*}
Similarly we have
$$\G(1-a_j-n_1-\cdots -n_m)=
\frac{\G(1-a_j)}{(-1)^{n_1+\cdots +n_m}(a_j,n_1+\cdots+n_m)}.
$$
Hence the following gamma factors including $n_1,\dots,n_m$ 
can be expressed in terms of Pochhammer's symbols as
\begin{eqnarray*}
& &\frac{\G(1\!-\! b_{j,1}\!-\! n_1)\cdots \G(1\!-\! b_{j,m}\!-\!n_m)
%\G(1\!+\!b_{j,1}\!+\!\cdots\!+\!b_{j,m}\!-\!a_j\!-\!m)
}
{\G(1-a_j-n_1-\cdots-n_m)}\\
&=&
\frac{\G(1\!-\! b_{j,1})\cdots \G(1\!-\! b_{j,m})
%\G(1\!+\!b_{j,1}\!+\!\cdots\!+\!b_{j,m}\!-\!a_j\!-\!m)
}
{\G(1-a_j)}\cdot 
\frac{(a_j,n_1+\cdots+n_m)}{(b_{j,1},n_1)\cdots(b_{j,m},n_m)}, 
\end{eqnarray*}
which yield the coefficient of $x_1^{n_1}\cdots x_m^{n_m}$ in 
$F_C^{p,m}(a,B;x)$.
\end{proof}

\section{A  system of  hypergeometric differential equations}

%Similarly to \cite[Proof of Proposition 9.1.2]{IKSY}, 
We can easily show the following lemma by using actions of Euler's operators 
$\theta_k=x \pa_k$ $(k=1,\dots,m)$ on power functions $x_j^{\a_j}$ of $x_j$ 
$(j=1,\dots,m)$: 
\begin{equation}
\label{eq:theta-rel}
\theta_k (x_j^{\a_j})=\delta_{j,k}\a_jx_j^{\a_j},%\quad \theta_k x_j^{\a_j}=0\ (j\ne k),
\end{equation}
where  $\pa_k$ denotes the holomorphic partial differential operator 
$\frac{\pa}{\pa x_k}$ and $\delta_{j,k}$ denotes Kronecker's symbol. 

\begin{lemma}
\label{lem:HGDE}
The series $F_C^{p,m}(a,B;x)$ satisfies hypergeometric 
differential equations
\begin{align}
\label{differential equations first time}
 &\theta_k(b_{1,k}-1+\theta_k)\cdots (b_{p-1,k}-1+\theta_k)F_C^{p,m}(a,B;x)
\\
\nonumber
=&x_k(a_1+\theta_1+\cdots+\theta_m)\cdots
(a_p+\theta_1+\cdots+\theta_m)F_C^{p,m}(a,B;x)
%
%& &\theta_2(b_3-1+\theta_2)(b_4-1+\theta_2)f(x)\\
%&=&x_2(a_1+\theta_1+\theta_2)(a_2+\theta_1+\theta_2)
%(a_3+\theta_1+\theta_2)f(x),
\end{align}
for $k=1,\dots,m$.
% where $\theta_k=x_k\frac{\pa}{\pa x_k}$ $(1\le k\le m)$.
\end{lemma}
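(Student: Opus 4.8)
The plan is to verify directly that each differential operator $\ell_k$ annihilates the series $F_C^{p,m}(a,B;x)$ by applying the operators term-by-term to the power series and comparing coefficients. First I would recall the basic action \eqref{eq:theta-rel}: on the monomial $x^n = x_1^{n_1}\cdots x_m^{n_m}$ one has $\theta_k(x^n) = n_k x^n$, and hence $(\theta_1+\cdots+\theta_m)(x^n) = (n_1+\cdots+n_m)x^n$. Writing $|n| = n_1+\cdots+n_m$ and denoting by $A_n$ the coefficient of $x^n$ in $F_C^{p,m}(a,B;x)$ (as in the proof of Proposition \ref{prop:converge}), the left-hand side of \eqref{differential equations first time} applied to the series produces
$$
\sum_{n\in\N^m} A_n\, n_k(b_{1,k}-1+n_k)\cdots(b_{p-1,k}-1+n_k)\, x^n,
$$
while the right-hand side, because of the factor $x_k$ which shifts the $k$-th index, produces
$$
\sum_{n\in\N^m} A_n\,(a_1+|n|)\cdots(a_p+|n|)\, x_1^{n_1}\cdots x_k^{n_k+1}\cdots x_m^{n_m}
= \sum_{n\,:\,n_k\ge 1} A_{n-e_k}\,(a_1+|n|-1)\cdots(a_p+|n|-1)\, x^n,
$$
where $e_k$ is the $k$-th standard basis vector.

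Next I would equate the coefficient of $x^n$ on the two sides. For $n$ with $n_k = 0$ the left side vanishes because of the leading factor $n_k = \theta_k$, and the right side has no contribution, so the identity holds trivially. For $n$ with $n_k \ge 1$ the claim reduces to the single scalar identity
$$
A_n\, n_k(b_{1,k}-1+n_k)\cdots(b_{p-1,k}-1+n_k) = A_{n-e_k}\,(a_1+|n|-1)\cdots(a_p+|n|-1).
$$
This is verified by substituting the explicit formula
$$
A_n = \frac{(a_1,|n|)\cdots(a_p,|n|)}{\prod_{l=1}^m\{(b_{1,l},n_l)\cdots(b_{p-1,l},n_l)\,n_l!\}}
$$
and using the elementary Pochhammer recursions $(\alpha, s+1) = (\alpha+s)(\alpha,s)$ applied to each $(a_i,\cdot)$ as $|n|-1 \mapsto |n|$, and $(b_{j,k}, n_k) = (b_{j,k}, n_k-1)(b_{j,k}+n_k-1)$ together with $n_k! = (n_k-1)!\,n_k$ for the $k$-th factor in the denominator; the factors indexed by $l\ne k$ are identical on both sides and cancel. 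Matching numerator and denominator term by term gives exactly the required relation, so the series is a formal solution, and since by Proposition \ref{prop:converge} it converges on the nonempty open set $\D$, the differential equations hold there as genuine analytic identities.

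I do not expect any serious obstacle: the only points requiring a little care are the index shift caused by multiplication by $x_k$ (handled by reindexing $n \mapsto n - e_k$) and the bookkeeping of which Pochhammer symbols change and which cancel. One should also note at the outset that the parameters are assumed to satisfy $b_{j,k}\notin -\N$, so all the $A_n$ are well defined and no denominator vanishes; with that in place the term-by-term manipulation is justified within the radius of convergence because the series converges absolutely and locally uniformly on $\D$, which legitimizes differentiating under the summation sign.
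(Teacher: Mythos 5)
Your proposal is correct and is exactly the argument the paper intends: the paper gives no detailed proof, saying only that the lemma follows easily from the action \eqref{eq:theta-rel} of the Euler operators on monomials, and your coefficient-by-coefficient verification via the Pochhammer recursions $(a_i,|n|)=(a_i,|n|-1)(a_i+|n|-1)$ and $(b_{j,k},n_k)=(b_{j,k},n_k-1)(b_{j,k}+n_k-1)$ is the standard way to fill in those details. The index shift from the factor $x_k$ and the vanishing of both sides when $n_k=0$ are handled correctly.
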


\begin{definition}
\label{def:System-HGDE} 
We define $\cF_C^{p,m}(a,B)$ as the system of the $m$ 
hypergeometric differential equations in Lemma \ref{lem:HGDE}. 
\end{definition}
We set $\Z_p=\{1,2,\dots,p\}=\Z/(p\Z)$. The element $p\in \Z_p$ is 
often regarded as $0$. 
From now on we assume the following conditions:
\begin{equation}
\label{eq:non-integral}
%a_i-\sum_{j\in J}b_{j,k},\notin \Z,   
a_i-\sum_{k=1}^m b_{j_k,k},\notin \Z,
\end{equation}
\begin{equation}
\label{eq:non-integral-addition}
b_{j,k}-b_{j',k}\notin \Z,
\end{equation}
where $1\le i\le p$, $1\le j<j'\le p$, $1\le k\le m$, and  
$J=(j_1,\dots,j_m)\in (\Z_p)^m$.
%\red{ $J$ in \eqref{eq:non-integral} runs over the set $(\Z_p)^m$.}
There are $p^{m+1}$  conditions for \eqref{eq:non-integral}, 
and $mp(p-1)/2$ conditions for \eqref{eq:non-integral-addition}. 

The following proposition gives explicit expressions of independent
solutions to the system of differential equations
\eqref{differential equations first time}. 
To give the statement of the proposition, we introduce
maps $\eta_{j_k}$ ($1\le k\le m$, $j_k\in \Z_p$) 
on the space of exponent parameters $b_k$: 
$$
\eta_{j_k}(b_k)=b_k+(1-b_{j_k,k})(\one_p+e_{j_k}-e_p)
%\ k=1,\dots,m
,
$$
where $\one_p=\tr(1,\dots,1)\in \Z^p$ and
$e_j$ is the $j$-th unit column vector.
% and $e_0$ is regarded as $e_p$.
% I think that $\eta_{j_k}$'s expression admits 
% $$
% \eta_{j_k}(b_k)=b_k+(1-b_{j_k,k})\one_p\quad (k=1,\dots,m),
% $$
% by the symmetry of the series for the parameters 
% $b_{1,k}, \dots, b_{p-1,k},b_{p,k}=1$.

\begin{remark} If $j_k= p$ then $1-b_{j_k,k}=0$ and 
$\eta_{j_k}=\eta_p$ is the identity map $id_p$, 
otherwise it is a reflection 
$$
\eta_{j_k}=id_p-\frac{2}{\tr v_{j_k} W v_{j_k}} v_{j_k}\tr v_{j_k} W, 
$$
where $v_{j_k}=\one_p+e_{j_k}-e_p=\tr(1,\dots,1,\overset{j_k\textrm{-th}}{2},
1,\dots,1,0)
% \bordermatrix{
%  & & & &j_k& & & & &\cr
%  &1&\dots&1&2&1&\dots &1&0}
$ and 
$$
W=p\cdot id_p- \one_p\tr \one_p=\begin{pmatrix}
p-1 & -1 & \cdots & -1\\
-1 & p-1 & \cdots & -1\\
\vdots &\vdots &\ddots &\vdots \\
-1 & -1 &\cdots & p-1
\end{pmatrix},
$$
which is a positive semi-definite matrix with 
eigenvalues $0,\overbrace{p,\dots,p}^{p-1}$.
\end{remark}

\begin{proposition}
\label{prop:fund-solutions}

% A fundamental system of solutions to $\cF_C^{p,m}(a,B)$ is given by 
% the following:
Under the conditions \eqref{eq:non-integral} and 
\eqref{eq:non-integral-addition}, 
there are linearly independent $p^m$ solutions 
\begin{equation}
\label{eq:fund-solutions}
\Phi_J(a,B;x)
=\Big(\prod_{k=1}^m x_k^{\mu_{J,k}}\Big)
F_C^{p,m}\Big(a+\Sigma_J\one_p,\eta_{j_1}(b_1),\dots,\eta_{j_m}(b_m);x
\Big)
\end{equation}
to $\cF_C^{p,m}(a,B)$ 
on a small neighborhood in $\D$ 
indexed by $J=\{j_1,\dots,j_m\}\in{(\Z_p)}^m$, 
where
$$
\mu_{J,k}=1-b_{j_k,k},\quad 
\Sigma_J=\sum_{k=1}^m\mu_{J,k}=\sum_{k=1}^m(1-b_{j_k,k}).
$$
\end{proposition}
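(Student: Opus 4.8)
The plan is to verify directly that each $\Phi_J$ solves $\cF_C^{p,m}(a,B)$, and then to argue that the $p^m$ functions $\{\Phi_J\}_{J\in(\Z_p)^m}$ are linearly independent by examining their leading terms near the origin. For the first part, fix $J=(j_1,\dots,j_m)$ and write $\Phi_J=\big(\prod_k x_k^{\mu_{J,k}}\big)G(x)$ with $G(x)=F_C^{p,m}(a+\Sigma_J\one_p,\eta_{j_1}(b_1),\dots,\eta_{j_m}(b_m);x)$. The key computational device is the conjugation rule for the Euler operators: since $\theta_k$ acts on $x_k^{\mu}$ by $\theta_k(x_k^{\mu}h)=x_k^{\mu}(\mu+\theta_k)h$, conjugating the operator $\ell_k$ by $\prod_k x_k^{\mu_{J,k}}$ replaces each $\theta_k$ by $\mu_{J,k}+\theta_k$ and each $\theta_1+\cdots+\theta_m$ by $\Sigma_J+\theta_1+\cdots+\theta_m$. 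So I must check that
\[
(\mu_{J,k}+\theta_k)\prod_{j=1}^{p-1}(b_{j,k}-1+\mu_{J,k}+\theta_k)
-x_k\prod_{i=1}^{p}(a_i+\Sigma_J+\theta_1+\cdots+\theta_m)
\]
coincides with the operator $\ell_k'$ obtained by substituting the new parameters $a'=a+\Sigma_J\one_p$, $b_k'=\eta_{j_k}(b_k)$ into $\ell_k$, so that Lemma \ref{lem:HGDE} applied to $G$ finishes the job. On the right, the second product is literally $x_k\prod_i(a_i+\Sigma_J+\theta_1+\cdots+\theta_m)$, which matches. For the first product, the point is that the multiset $\{b_{j,k}-1+\mu_{J,k}\}_{1\le j\le p-1}\cup\{\mu_{J,k}\}$ — that is, $\{b_{1,k},\dots,b_{p,k}\}$ shifted by $\mu_{J,k}=1-b_{j_k,k}$ and with the bottom entry $b_{p,k}=1$ sent to $\mu_{J,k}+0=\mu_{J,k}$ — should equal the multiset $\{(b_{j,k}')-1\}_{1\le j\le p-1}\cup\{0\}$ of shifts appearing in $\ell_k'$, where $b_k'=\eta_{j_k}(b_k)$. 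Unwinding the definition $\eta_{j_k}(b_k)=b_k+(1-b_{j_k,k})(\one_p+e_{j_k}-e_p)$: the $j_k$-th entry becomes $b_{j_k,k}+2(1-b_{j_k,k})=2-b_{j_k,k}$, the $p$-th entry becomes $1+(1-b_{j_k,k})-(1-b_{j_k,k})=1$ (consistent with the normalization $b_{p,k}'=1$), and every other entry $b_{j,k}$ becomes $b_{j,k}+1-b_{j_k,k}=b_{j,k}+\mu_{J,k}$. Hence the set of exponents-minus-one in $\ell_k'$ is $\{b_{j,k}+\mu_{J,k}-1: j\ne j_k, j\le p-1\}\cup\{2-b_{j_k,k}-1\}=\{b_{j,k}+\mu_{J,k}-1: j\ne j_k, j\le p-1\}\cup\{\mu_{J,k}\}$, together with the extra $0$ coming from $\theta_k$ itself. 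On the left, the factors $b_{j,k}-1+\mu_{J,k}+\theta_k$ for $j\ne j_k$ give $\{b_{j,k}+\mu_{J,k}-1\}$, the $j=j_k$ factor gives $b_{j_k,k}-1+\mu_{J,k}=0$, and the outer $\mu_{J,k}+\theta_k$ gives $\mu_{J,k}$; so after reordering the two multisets agree. This proves $\ell_k\cdot\Phi_J=0$ for all $k$.

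For convergence and well-definedness: the new parameters $b_{j,k}'$ never lie in $-\N$ under the non-integrality hypotheses \eqref{eq:non-integral-addition} (the shift $\mu_{J,k}=1-b_{j_k,k}$ added to $b_{j,k}$ gives $b_{j,k}-b_{j_k,k}+1\notin\Z$ when $j\ne j_k$, and the $j_k$-th new entry $2-b_{j_k,k}\notin-\N$ since otherwise $b_{j_k,k}\in\Z$, contradicting \eqref{eq:non-integral-addition} with the normalized row $b_{p,k}=1$), so each $F_C^{p,m}(a',B';x)$ is a bona fide series converging on $\D$ by Proposition \ref{prop:converge}; multiplying by $\prod_k x_k^{\mu_{J,k}}$ yields a multivalued holomorphic function on a small polydisc neighborhood of a base point in $\D\cap(\C^\times)^m$.

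Finally, linear independence. Each $\Phi_J$ has the form $\big(\prod_k x_k^{\mu_{J,k}}\big)\big(1+\text{higher order in }x\big)$, i.e. its lowest-order term is $c_J\prod_k x_k^{\mu_{J,k}}$ with $c_J\ne0$ (the constant term of the hypergeometric series is $1$). Suppose $\sum_J c_J'\Phi_J\equiv0$ on the neighborhood. The exponent vectors $(\mu_{J,1},\dots,\mu_{J,m})=(1-b_{j_1,1},\dots,1-b_{j_m,m})$ are pairwise distinct as $J$ ranges over $(\Z_p)^m$: if $J\ne J'$, say $j_k\ne j_k'$ in some slot, then $1-b_{j_k,k}\ne1-b_{j_k',k}$ because $b_{j_k,k}-b_{j_k',k}\notin\Z$, in particular $\ne0$, by \eqref{eq:non-integral-addition} (using that the $p$-th row is $b_{p,k}=1$, so differences with it are also non-integral by that hypothesis applied to $j'=p$). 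Moreover, none of these exponents differ by a nonzero element of $\N^m$ from another in a way that a monomial of one series could cancel the leading monomial of another — again because all the exponents involved in a single $\Phi_J$ lie in $(\mu_{J,1}+\N,\dots,\mu_{J,m}+\N)$ and the classes $\mu_{J,k}\bmod 1$ are all distinct across $J$. Hence on each ``branch'' $x^{\mu_J}\C[[x]]$ the relation forces $c_J'c_J=0$, so $c_J'=0$ for all $J$. Therefore the $p^m$ solutions $\Phi_J$ are linearly independent.

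The main obstacle I anticipate is purely bookkeeping in the first step: correctly tracking how $\eta_{j_k}$ permutes and shifts the row $b_k$ — in particular seeing that the reflection interpretation of $\eta_{j_k}$ is exactly the operation ``add $\mu_{J,k}$ to all entries except swap the roles of the $j_k$-th entry and the normalized last entry'' — and confirming that this makes the conjugated operator match $\ell_k'$ on the nose rather than merely up to a unit. Once the multiset identity of the Euler-operator factors is in hand, everything else is routine.
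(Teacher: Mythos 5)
Your proposal is correct and follows essentially the same route as the paper: conjugate each $\ell_k$ by the monomial factor, match the resulting multiset of Euler-operator shifts with the one determined by $\eta_{j_k}(b_k)$ so that Lemma \ref{lem:HGDE} applies to the shifted series, and deduce linear independence from the pairwise non-integral differences of the exponents $\mu_{J,k}$ guaranteed by \eqref{eq:non-integral-addition}. The paper states this more tersely; your version merely spells out the bookkeeping (the multiset identity and the check that the new parameters avoid $-\N$), which is exactly the verification the paper leaves implicit.
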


\begin{remark}
If $j_k=p(=0)$ then $\mu_{J,k}=0$ and $\eta_{j_k}(b_k)=b_k$. Thus we have 
$$\Phi_J(a,B;x)=F_C^{p,m}(a,B;x)$$
for $J=\{p,\dots,p\}(=\{0,\dots,0\})$. 
\end{remark}

\begin{proof}
% I have not checked. ????? This is the key theorem. 
% I would like Prof. Kaneko and Ohara to complete this theorem.
%To show that $\Phi_J(a,B;x)$ is a solution to $\cF_C^{p,m}(a,B)$, 
We act the operators 
$\theta_k(b_{1,k}-1+\theta_k)\cdots (b_{p-1,k}-1+\theta_k)$ and 
$x_k(a_1+\theta_1+\cdots+\theta_m)\cdots(a_p+\theta_1+\cdots+\theta_m)$ on 
$\Phi_J(a,B;x)$.
Move the factor $\Big(\prod\limits_{k=1}^m x^{\mu_{J,k}}\Big)$ in $\Phi_J(a,B;x)$ 
to the left of them by using %\eqref{eq:theta-rel}. 
$$
\theta_k\cdot (x_j^{\a_j}\cdot f(x))=x_j^{\a_j}\cdot 
(\delta_{j,k}\a_j+\theta_k)\cdot f(x).
$$
Then we have the system 
%$\cF_C^{p,m}\Big(a+\Sigma_J\one_p,\eta_{j_1}(b_1),\dots,\eta_{j_m}(b_m)\Big)$
of differential equations annihilating the series 
$F_C^{p,m}\Big(a+\Sigma_J\one_p,\eta_{j_1}(b_1),\dots,\eta_{j_m}(b_m);x\Big)$. 
Thus  $\Phi_J(a,B;x)$ is a solution to $\cF_C^{p,m}(a,B)$.

The linear independence of the $p^m$ solutions 
$\Phi_J(a,B;x)$ ($J\in (\Z_p)^m$) 
follows from that of the $p^m$ factors $\Big(\prod\limits_{k=1}^m x_k^{\mu_{J,k}}\Big)$ 
under \eqref{eq:non-integral-addition}.
\end{proof}

By Proposition \ref{prop:fund-solutions}, the rank of $\cF_C^{p,m}(a,B)$
is greater than or equal to $p^m$. 
In the next section, we show that it is equal to $p^m$.

\section{Singular locus and the rank of the associated  $D_m$ module}

We study the rank and the singular locus of 
the hypergeometric system $\cF_C^{p,m}(a,B)$. 
Main results in this section are as follows. 
\begin{theorem}
\label{th:rank}
Under the conditions  
\eqref{eq:non-integral} and \eqref{eq:non-integral-addition}.
The system $\cF_C^{p,m}(a,B)$ is of rank $p^m$.
As for the definition 
and properties of the rank of a system of differential equations, 
see Fact \ref{fact:rank}.
\end{theorem}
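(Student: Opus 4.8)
The plan is to work, as the introduction suggests, on the unramified $p$-fold cover $\f\colon z\mapsto x=(z_1^p,\dots,z_m^p)$ of $(\C^\times)^m$, since passing to this cover does not change the rank of the system. Write $\wt\ell_k=\f^*(\ell_k)$ for the pulled-back operators and let $\wt L_k$ be their symbols in $\C[z_i^{\pm},\wt\xi_i]$. Because $\theta_k=x_k\pa_k$ is $\f$-invariant (it pulls back to $\frac1p\,\wt\theta_k$ where $\wt\theta_k=z_k\wt\pa_k$), the principal symbols are easy to write down: modulo lower-order terms, $\ell_k$ has symbol $\theta_k^{\,p}-x_k(\theta_1+\cdots+\theta_m)^p$ up to the substitution $\theta_j\rightsquigarrow z_j\wt\xi_j$ and a power of $p$, so $\wt L_k$ is (a constant multiple of) $(z_k\wt\xi_k)^p-z_k^p(z_1\wt\xi_1+\cdots+z_m\wt\xi_m)^p$. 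The first step is to record these symbols precisely and to set $A=\C[z_i^{\pm},1/R(z)]$ with $R(z)=\prod_{(i_1,\dots,i_m)\in\Z_p^m}(1-\z_p^{i_1}z_1-\cdots-\z_p^{i_m}z_m)$; the rank of the system over the generic point of $(\C^\times)^m$ equals the rank of the $A$-module $A[\wt\xi_1,\dots,\wt\xi_m]/(\wt L_1,\dots,\wt L_m)$ once we know that this module is locally free, so the task reduces to computing that rank.

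The key algebraic step is to replace the generators $\wt L_1,\dots,\wt L_m$ by a more convenient set $\wt M_1,\dots,\wt M_m$ generating the same ideal, chosen so that they form a \emph{regular sequence} in $A[\wt\xi_1,\dots,\wt\xi_m]$. A natural choice: set $s=z_1\wt\xi_1+\cdots+z_m\wt\xi_m$ and $u_k=z_k\wt\xi_k$, so $\wt L_k=u_k^p-z_k^p s^p=\prod_{i\in\Z_p}(u_k-\z_p^i z_k s)$; one then takes, after a triangular change of generators, $\wt M_k$ to be essentially these together with one relation capturing $s$ itself. The point is that on $\Spec A[\wt\xi]$ the common zero locus of the $\wt L_k$ is \emph{finite} over $\Spec A$ of degree $p^m$: at a point $z\in(\C^\times)^m$ with $R(z)\neq0$ one solves $u_k=\z_p^{i_k}z_k s$ for each $k$, substitutes into $s=\sum z_k\wt\xi_k=\sum u_k=(\sum \z_p^{i_k}z_k)s$, and the condition $R(z)\neq0$ forces $s=0$ hence all $u_k=0$ — so the fiber is reduced to the single point $\wt\xi=0$ but \emph{with multiplicity $p^m$}, because each factor $u_k-\z_p^{i_k}z_ks$ contributes its full degree. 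Since a quasi-finite morphism of the expected dimension defined by $m$ equations in an $m$-relative-dimensional affine space over the regular ring $A$ is automatically a complete intersection, $\wt M_1,\dots,\wt M_m$ is a regular sequence; a Hilbert-polynomial (or Koszul) computation then gives that $A[\wt\xi]/(\wt L_1,\dots,\wt L_m)$ is locally free over $A$ of rank equal to the product of the $\wt\xi_k$-degrees, namely $p\cdot p\cdots p=p^m$.

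From local freeness of rank $p^m$ of this symbol module on $U=\{R(x)\neq0\}$, standard $D$-module theory (the rank of a holonomic system equals the generic fiber dimension of the characteristic variety, counted properly; see Fact \ref{fact:rank}) gives $\rk\cF_C^{p,m}(a,B)\le p^m$ on $U$, hence on $(\C^\times)^m$ and, since rank is generic, on all of $\C^m$. Combined with the lower bound $\rk\ge p^m$ already furnished by the $p^m$ linearly independent solutions $\Phi_J$ of Proposition \ref{prop:fund-solutions}, we conclude $\rk\cF_C^{p,m}(a,B)=p^m$. The main obstacle I anticipate is the bookkeeping in the middle step: one must check that $\init(\wt{\mathfrak I})$ — the \emph{true} symbol ideal of the pulled-back $D$-ideal — does not properly enlarge $\wt{\mathcal I}^*=(\wt L_1,\dots,\wt L_m)$ in a way that would \emph{lower} the generic rank. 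Since $\wt{\mathcal I}^*\subseteq\init(\wt{\mathfrak I})$ always, an a priori strict inclusion could only make the characteristic variety smaller; but the lower bound $p^m$ from the explicit solutions pins the rank exactly, so the inclusion must in fact be an equality of generic ranks on $U$. One should state this carefully: the locally free rank-$p^m$ computation gives $\mathrm{rk}\le p^m$, the solutions give $\ge p^m$, and the two squeeze out the theorem without needing $\wt{\mathcal I}^*=\init(\wt{\mathfrak I})$ on the nose.
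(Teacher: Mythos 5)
Your proposal is correct and follows the same overall strategy as the paper: pass to the covering $z\mapsto(z_1^p,\dots,z_m^p)$ (which preserves the rank), bound the rank from above by $\dim_{\C(z)}\C(z)[\wt\xi]/(\wt L_1,\dots,\wt L_m)$ using $\wt{\mathcal I}^*\subseteq\init(\wt{\mathfrak I})$, show that this dimension is $p^m$ via a regular-sequence/Hilbert-series argument, and squeeze against the lower bound from the $p^m$ solutions $\Phi_J$ of Proposition \ref{prop:fund-solutions}; you also correctly identify and dispose of the only delicate point, namely that a strict inclusion $\wt{\mathcal I}^*\subsetneq\init(\wt{\mathfrak I})$ could only decrease the rank. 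The one place where you genuinely diverge is the proof of the key lemma that the symbols form a regular sequence: the paper changes generators to $M_k=\wt\xi_k^p-\wt\xi_m^p$ ($k<m$) and $M_m=\wt\xi_m^p-(\sum_j z_j\wt\xi_j)^p$ and runs an explicit double induction with the evaluation maps $\phi_k$, $\psi_k$ into $(\A_0)^{p^k}$, whereas you compute the fibers of the characteristic variety over $U=\{R\neq0\}$ directly (showing each fiber is the origin $\wt\xi=0$, using $R(z)\neq0$ to kill $s=\sum z_k\wt\xi_k$) and then invoke Cohen--Macaulayness/unmixedness of $\C[z^\pm,1/R][\wt\xi]$ to conclude that $m$ equations cutting out codimension $m$ form a regular sequence. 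Your route is shorter and more conceptual; the paper's explicit splitting maps buy more, since they are reused verbatim to prove local freeness of the quotient over all of $U$ (via Lemma \ref{injective specialization}) and to show the characteristic variety is empty off the zero section, both of which are needed for Theorem \ref{th:sing-loc}. For the rank theorem alone your weaker generic statement suffices, though you should make explicit that the Koszul exactness survives localization to $\C(z)$ and that the quotient there is nonzero (the ideal is generated in positive $\wt\xi$-degree), so its $\C(z)$-dimension is exactly $(1+t+\cdots+t^{p-1})^m|_{t=1}=p^m$.
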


\begin{theorem}
\label{th:sing-loc}
Let $R(x)=R(x_1,\dots,x_m)$ be a polynomial in $x_1,\dots,x_m$ of degree 
$p^{m-1}$ given by 
\begin{equation}
\label{eq:R(x)}
R(x_1,\dots,x_m)=\prod_{(i_1,\dots,i_m)\in (\Z_p)^m}
(1-\z_p^{i_1}\sqrt[p]{x_1}-\cdots -\z_p^{i_m}\sqrt[p]{x_m})
\end{equation}
where $\z_p=\exp(2\pi\sqrt{-1}/p)$ is the primitive $p$-th root 
of unity.

Under the conditions  
\eqref{eq:non-integral} and \eqref{eq:non-integral-addition},
the singular locus of the %hypergeometric 
system $\cF_C^{p,m}(a,B)$ is 
\begin{equation}
\label{eq:sing-locus}
S=S(x)=\{x\in \C^m\mid x_1\cdots x_m R(x)=0\},
\end{equation}
%
%$\Z_p=\Z/p\Z=\{0,1,\dots,p-1\}$.
For the definition of the singular locus
of a system of differential equations see Fact \ref{fact:sing-loc}.
\end{theorem}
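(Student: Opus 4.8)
The plan is to establish the two inclusions separately: that $\cF_C^{p,m}(a,B)$ is non-singular on $\C^m\setminus S$, and that every irreducible component of $S=\{x_1\cdots x_m R(x)=0\}$ lies in the singular locus. For everything inside $(\C^\times)^m$ I would pass, as in the introduction, to the pull-back $\widetilde{\mathfrak I}$ of the system under the finite étale covering $\f\colon z\mapsto x=(z_1^p,\dots,z_m^p)$ — recall that $\f$ preserves the rank, pulls back the singular locus, and sends $R(x)$ to the product $R(z)$ of $p^m$ linear forms in \eqref{first definitio of singular locus}; the coordinate hyperplanes $\{x_k=0\}$, not met by this covering, are handled directly in the $x$-coordinates.

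\textbf{Non-singularity off $S$.} Computing the principal symbol of $\widetilde\ell_k=\f^*(\ell_k)$ from $\theta_k\mapsto\tfrac1p z_k\widetilde{\pa_k}$, one gets, up to a unit of $\C[z_i^{\pm},\xi_i]$,
$$
\widetilde L_k=\xi_k^p-\Big(\sum_{l=1}^m z_l\xi_l\Big)^{p}=\prod_{j\in\Z_p}\Big(\xi_k-\z_p^{j}\sum_{l=1}^m z_l\xi_l\Big)\qquad(k=1,\dots,m).
$$
Since $\widetilde{\mathcal I}^*\subset\init(\widetilde{\mathfrak I})$, we have $\ch(\widetilde{\mathfrak I})=V(\init(\widetilde{\mathfrak I}))\subset V(\widetilde{\mathcal I}^*)$, and solving $\widetilde L_1=\cdots=\widetilde L_m=0$ gives
$$
V(\widetilde{\mathcal I}^*)=\{\xi=0\}\cup\bigcup_{J=(j_1,\dots,j_m)\in(\Z_p)^m}\Big\{(z,\xi)\ \Big|\ 1-\sum_{l}\z_p^{j_l}z_l=0,\ \xi\in\C\cdot(\z_p^{j_1},\dots,\z_p^{j_m})\Big\},
$$
the zero section together with the conormal varieties of the hyperplanes whose union is $\{R(z)=0\}$. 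Hence on $\{R(z)\neq0\}$ one has $\ch(\widetilde{\mathfrak I})\subset\{\xi=0\}$, so $\widetilde{\mathfrak I}$ is an integrable connection there, of rank $p^m$ by Theorem~\ref{th:rank}; descending along $\f$, $\cF_C^{p,m}(a,B)$ is non-singular on $(\C^\times)^m\setminus\{R(x)=0\}$, and therefore its singular locus is contained in $S$.

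\textbf{The coordinate hyperplanes lie in the singular locus.} Fix $x^0\in\D$ with $x^0_k=0$ and the other coordinates small and non-zero; then $x^0\notin\{R(x)=0\}$ (if $R(x)=0$ then $\sum_l\sqrt[p]{|x_l|}\ge1$ by the triangle inequality, so $x\notin\D$), and near $x^0$ the series $F_C^{p,m}(a+\Sigma_J\one_p,\eta_{j_1}(b_1),\dots,\eta_{j_m}(b_m);x)$ converges and tends to a non-zero limit. Choosing any $J$ with $j_k\neq p$, the solution $\Phi_J(a,B;x)$ of Proposition~\ref{prop:fund-solutions} then carries the genuinely multivalued factor $x_k^{1-b_{j_k,k}}$ with $1-b_{j_k,k}=b_{p,k}-b_{j_k,k}\notin\Z$ by \eqref{eq:non-integral-addition}, so $\cF_C^{p,m}(a,B)$ is not an integrable connection near $x^0$. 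Thus $x^0$ is a singular point; such points are dense in the irreducible hypersurface $\{x_k=0\}$ and the singular locus is closed, so $\{x_k=0\}$ lies in it. Together with the previous step this shows that $\{x_1\cdots x_m=0\}$ is contained in the singular locus.

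\textbf{The hypersurface $\{R(x)=0\}$ lies in the singular locus — the main obstacle.} Near a generic point of $\{R(x)=0\}$ the computation above shows $\ch(\widetilde{\mathfrak I})$ is contained in the zero section together with the conormal of a single hyperplane component; so everything comes down to showing that the local monodromy around $\{R(x)=0\}$ is nontrivial. This cannot be read off the symbols, since $\widetilde{\mathcal I}^*$ may be strictly smaller than $\init(\widetilde{\mathfrak I})$, and this is precisely where the $p^m$ explicit solutions are needed. I would argue from the Euler integral of Theorem~\ref{th:int-rep}: one checks that the pole divisor $\{Q(t,x)=0\}$ of the integrand degenerates against a stratum of the regularized twisted cycle $\De_1\times\cdots\times\De_{p-1}$ precisely over $\{R(x)=0\}$, producing a vanishing cycle, so a Picard–Lefschetz argument shows that continuing $F_C^{p,m}(a,B;x)$ around a generic point of $\{R(x)=0\}$ changes it by a non-zero multiple of the period over that vanishing cycle (non-zero because $a_p\notin\Z$); hence the monodromy is nontrivial. (Alternatively, one may note that $\D$ is pinched against $\{R(x)=0\}$ at the boundary point $x=(1/m^p,\dots,1/m^p)\in\partial\D$ and determine the type of singularity there by restricting the system to a generic complex line, obtaining a Fuchsian ODE whose singular point on $\{R(x)=0\}$ is modelled on that of the classical ${}_pF_{p-1}$ at $x=1$.) Combining the three steps, the singular locus equals $S=\{x_1\cdots x_m R(x)=0\}$.
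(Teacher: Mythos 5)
Your first two steps are sound and essentially match the paper: the containment $\sing(\mathfrak I)\subset S$ is obtained on the covering from the symbols $\wt L_k$ (the paper derives it from the regular-sequence/local-freeness statement of Theorem~\ref{locally free of exact rank}, but your direct computation of $V(\wt{\mathcal I}^*)$ as the zero section union the conormals of the $p^m$ hyperplanes is equivalent), and the coordinate hyperplanes are put into the singular locus exactly as in the paper, via the non-integral exponents $1-b_{j_k,k}$ of the solutions $\Phi_J$.

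The genuine gap is in your third step, which you correctly identify as the crux but do not close. The Picard--Lefschetz argument is only a plan: you do not verify that the discriminant of the family $\{Q(t,x)=0\}$ relative to the faces of $\De_1\times\cdots\times\De_{p-1}$ is precisely $\{R(x)=0\}$, and the claim that the vanishing-cycle period is non-zero ``because $a_p\notin\Z$'' is unjustified --- in twisted homology such periods can vanish, and the relevant non-resonance condition involves sums of several exponents, not $a_p$ alone. The alternative via restriction to a generic line is likewise not carried out. The paper closes this step by a completely different, soft argument that avoids computing any local monodromy around $\{R=0\}$: since $R(x)$ is irreducible (proved via the Galois covering $\f$), $S(x)\cap(\C^\times)^m$ is an irreducible divisor, so if the inclusion $\sing(\mathfrak I)\cap(\C^\times)^m\subset S(x)\cap(\C^\times)^m$ were proper then $\sing(\mathfrak I)\cap(\C^\times)^m$ would have codimension at least $2$, whence $\pi_1((\C^\times)^m-\sing(\mathfrak I))\cong\pi_1((\C^\times)^m)\cong\Z^m$ is abelian; regular singularity together with abelian monodromy forces the solution space to be spanned by monomials $\prod_i x_i^{\beta_i}$, contradicting the series solutions of Proposition~\ref{prop:fund-solutions}. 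Some argument of this kind (or a completed vanishing-cycle computation) is needed before the equality $\sing(\mathfrak I)=S$ can be asserted.
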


\begin{remark}
We give examples of $R(x)$ for small $m$ and $p$: 
$$
\begin{array}{cc|c}
m & p & R(x) \\
\hline 
2 & 2 &(1-x_1-x_2)^2-4x_1x_2\\[1mm]
2 & 3 &(1-x_1-x_2)^3-27x_1x_2\\[1mm]
2 & 4 &\{(1-x_1-x_2)^2-4x_1x_2\}^2-128x_1x_2(1+x_1+x_2)\\[1mm]
3 & 2 &
\begin{matrix}
\{(1\!+\! x_1\!+\! x_2\!+\! x_3)^2-
4(x_1x_2\!+\! x_1x_3\!+\! x_2x_3\!+\! x_1\!+\! x_2\!+\! x_3)\}^2\\
-64x_1x_2x_3
\end{matrix}
\\
\end{array}
$$
When $m=3$, $p=2$, $R(x)=0$ is called Steiner's surface, refer to \cite{Mu}.
\end{remark}

\subsection{Weyl algebras and systems of differential equations}
In this section, we recall basic facts on systems of
differential equations and the Weyl algebra
$D_m=\C\la x_1,\dots,x_m,\pa_1,\dots,\pa_m\ra$,    
which is a non-commutative $\C$-algebra generated by 
$x_1,\dots,x_m$ and %the partial defferential operators 
$\pa_1,\dots,\pa_m$ with relations  
\begin{equation}
\label{eq:rel-Weyl}
x_ix_j=x_jx_i,\quad \pa_i\pa_j=\pa_j\pa_i,\quad 
\pa_i x_j=x_j \pa_i+\delta_{i,j},
\end{equation}
for $1\le i,j\le m$. 
By the relation \eqref{eq:rel-Weyl}, any element  $\Theta \in D_m$ can be
expressed as a finite sum 
\begin{equation}
\label{eq:element-WA}
\Theta =\sum_{\a,\b\in \N^{m}} c_{\a,\b}x_1^{\a_1}\cdots x_m^{\a_m}
\pa_1^{\b_1}\cdots \pa_m^{\b_m},
%z^\a\pa^\b
\end{equation}
where $c_{\a,\b}\in \C$ vanish for sufficiently large 
$|\a|=\a_1+\cdots +\a_m$ and $|\b|=\b_1+\cdots +\b_m$ 
for $\a=(\a_1,\dots,\a_m)$, $\b=(\b_1,\dots,\b_m)$.
We set 
%The order $\ord(\Theta)$ of $\Theta$ is given by 
$$
\ord(\Theta)=\max\{|\b|\mid c_{\a,\b}\ne 0\}.
$$
The subspaces 
$$
D_m^{\leq d}=\{\Theta\in D_m\mid\ord(\Theta)\leq d\}\quad (d\in \N)
$$
form a filtration of $D_m$ compatible with multiplication, 
and we have the associated graded ring
$$
\gr(D_m)=\bigoplus_{d\in \N} \gr^d(D_m),\quad 
\gr^d(D_m)=D_m^{\le d}/D_m^{\le d-1}, 
$$
of $D_m$. 
Note that $\gr(D_m)$ is a commutative ring
isomorphic to the polynomial ring 
$\C[x,\xi]=\C[x_1,\dots,x_m,\xi_1,\dots,\xi_m]$
in $2m$ variables 
$x_1,\dots,x_m,$ $\xi_1,\dots,\xi_m$, where $\xi_i$ is the image of $\pa_i$
under the natural projection.
For $\Theta\in D_m^{\leq d}$ in \eqref{eq:element-WA},
its image in $\gr^d(D_m)$ is equal to
$$
\init^d(\Theta)=\sum_{\substack{\a,\b\in \N^{m}\\ \b_1+\cdots+\b_m=d}}
c_{\a,\b}x_1^{\a_1}\cdots x_m^{\a_m}
\xi_1^{\b_1}\cdots \xi_m^{\b_m},
$$
which is called the $d$-initial form of $\Theta$.

Let $\ell_1, \dots, \ell_r$ be elements in $D_m$, 
and $\mathfrak{I}$ be the left ideal of $D_m$ generated by them.
By regarding the symbols $\pa_1,\dots,\pa_m$ as the 
holomorphic partial differential 
operators, we have a system of linear differential equations 
\begin{equation}
\label{general differential equation}
%\ell_1\mathcal F=0,\dots, \ell_m\mathcal F=0,
\ell_1\cdot f(x)=0,\dots, \ell_r\cdot f(x)=0,
\end{equation}
with respect to unknown function $f(x)=f(x_1,\dots,x_m)$. 
Note that this system is equivalent to 
$$\ell \cdot f(x)=0 %\quad  ^\forall\ell\in \mathfrak{I}
$$
for any $\ell\in \mathfrak{I}$.
% one can associate a left ideal of $D_m$ generated by
% $\ell_1, \dots, \ell_r$, and under this correspondence,
% systems of differential equations and left ideals of
% $D_m$ correspond one to one.
In this way, systems of linear differential equations correspond 
one to one to left ideals of $D_m$. 
By this correspondence, we identify a left ideal of
$D_m$ with a system of linear differential equations. 
For the left ideal $\mathfrak{I}$ of $D_m$, its 
initial ideal $\init(\mathfrak{I})$ in $\C[x,\xi]$ 
is defined by the ideal 
$$
\bigoplus_d\{ \init^d(\Theta)\mid \Theta\in \mathfrak{I}\cap D_m^{\leq d}
\}.
$$

\begin{fact}[{\cite[Definition~1.4.8]{SST}}]
\label{fact:rank}
The rank of $\mathfrak I$ is defined by
\[
\rank\,(\mathfrak I)=\dim_{\C(x)} \big(
\C(x)[\xi]\,/\, (\C(x)[\xi]\cdot \init\,(\mathfrak I))\big),
\]
where the quotient space in its right hand side is regarded as 
a vector space over the rational function field $\C(x)$.
It is known that $\rank\,(\mathfrak I)$  coincides with the dimension of 
the space of solutions to  
\eqref{general differential equation}
around a general point.
%$\cF_C^{p,m}(a,B)$. 
\end{fact}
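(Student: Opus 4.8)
The plan is to pass to the generic point of $\C^m$, realize the localized system as an integrable connection, and invoke the Frobenius integrability theorem; the statement is classical, so I only sketch the argument. Write $\mathcal D=\C(x)\otimes_{\C[x]}D_m=\C(x)\langle\pa_1,\dots,\pa_m\rangle$ for the Weyl algebra with rational-function coefficients and $\mathfrak I\mathcal D$ for the left ideal of $\mathcal D$ generated by $\ell_1,\dots,\ell_r$. First I would check that the rank, as defined above, equals the ``holonomic'' rank $\dim_{\C(x)}\mathcal D/\mathfrak I\mathcal D$: the order filtration extends to $\mathcal D^{\le d}=\C(x)\otimes_{\C[x]}D_m^{\le d}$ with associated graded ring $\C(x)[\xi]$, and since $\C(x)$ is flat over $\C[x]$ and multiplying an operator by a rational function does not raise its order, one gets $\mathfrak I\mathcal D\cap\mathcal D^{\le d}=\C(x)\otimes_{\C[x]}(\mathfrak I\cap D_m^{\le d})$ for every $d$, hence $\init(\mathfrak I\mathcal D)=\C(x)[\xi]\cdot\init(\mathfrak I)$; as a filtered $\C(x)$-vector space has the same dimension as its associated graded, this gives
\[
\rank(\mathfrak I)=\dim_{\C(x)}\bigl(\C(x)[\xi]/\C(x)[\xi]\cdot\init(\mathfrak I)\bigr)=\dim_{\C(x)}\bigl(\mathcal D/\mathfrak I\mathcal D\bigr)=:r.
\]

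Assume first $r<\infty$ (the only case needed here, by Theorem \ref{th:rank}). Next I would turn $M:=\mathcal D/\mathfrak I\mathcal D$ into an integrable connection: choose $P_1=1,P_2,\dots,P_r\in D_m$ whose classes form a $\C(x)$-basis of $M$ and define matrices $A^{(i)}\in\operatorname{Mat}_r(\C(x))$ by $\pa_iP_j\equiv\sum_k A^{(i)}_{jk}P_k\pmod{\mathfrak I\mathcal D}$. In this basis $\pa_i$ acts on $M\cong\C(x)^r$ as the sum of the coefficientwise derivation $\pa_i$ and (the transpose of) $A^{(i)}$, and the commutation relations $\pa_i\pa_j=\pa_j\pa_i$ translate into the integrability (flatness) conditions on the $A^{(i)}$. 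Let $\Sigma\subsetneq\C^m$ be the proper algebraic locus consisting of the poles of the finitely many entries of the $A^{(i)}$ together with the poles of the coefficients of a chosen finite generating set of $\mathfrak I\mathcal D$; for $x_0\notin\Sigma$ and a small polydisc $U\ni x_0$ avoiding $\Sigma$, the module $M$ gives rise over $U$ to a holomorphic integrable connection $(\mathcal O_U^{\,r},\nabla)$.

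Then I would apply the Frobenius integrability theorem (equivalently, the holomorphic Cauchy--Kovalevskaya theorem for integrable first-order systems): the horizontal sections on $U$ of the dual connection form an $r$-dimensional $\C$-vector space, i.e. $\dim_\C\operatorname{Hom}_{\mathcal D}(M,\mathcal O_U)=r$. Finally I would identify this space with the local solution space of \eqref{general differential equation}: a $\mathcal D$-homomorphism $M=\mathcal D/\mathfrak I\mathcal D\to\mathcal O_U$ is determined by the image $f$ of the class of $1$, and $f$ is admissible exactly when $\mathfrak I\mathcal D\cdot f=0$, which for $f\in\mathcal O_U$ is equivalent to $\ell_1f=\cdots=\ell_rf=0$, because any element of $\mathfrak I\mathcal D$ is of the form $\sum_s Q_s\ell_s$ with $Q_s\in\mathcal D$ whose coefficients are holomorphic on $U$ (after shrinking $U$). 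Hence the holomorphic solution space of \eqref{general differential equation} near a general point has dimension $r=\rank(\mathfrak I)$. If $r=\infty$, the same ideas --- restricting to a generic line transversal to the characteristic variety and applying Cauchy--Kovalevskaya there --- show the solution space is infinite-dimensional as well; this case is classical and not needed for $\cF_C^{p,m}(a,B)$.

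I expect the genuine analytic content to be concentrated in the Frobenius step, which is standard; the one place demanding care is the comparison in the first paragraph, namely that extension of scalars to $\C(x)$ commutes with forming the initial ideal, so that the combinatorially defined $\rank(\mathfrak I)$ agrees with $\dim_{\C(x)}\mathcal D/\mathfrak I\mathcal D$. The remaining steps are bookkeeping about localization and the choice of a general point.
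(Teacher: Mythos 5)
This statement is quoted in the paper as a Fact from \cite[Definition 1.4.8 and Theorem 1.4.19]{SST}; the paper itself offers no proof, so there is nothing in-paper to compare with, and your sketch follows the standard Cauchy--Kovalevskaya--Kashiwara route (pass to $R=\C(x)\la \pa_1,\dots,\pa_m\ra$, identify $\rank(\mathfrak I)$ with $\dim_{\C(x)}R/R\mathfrak I$, convert to a Pfaffian system, apply the Frobenius theorem). Your first paragraph, the comparison $\init(R\mathfrak I)=\C(x)[\xi]\cdot\init(\mathfrak I)$, is correct --- clearing denominators preserves the order, so this is in fact the easy part, not the delicate one.

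The genuine gap is in the final identification. As written, $\mathrm{Hom}_{\mathcal D}(M,\mathcal O_U)$ does not parse: $\mathcal O_U$ is not a module over $\C(x)\la\pa\ra$, since rational functions with poles inside $U$ do not act on it; and the supporting claim that every element of $\mathfrak I\mathcal D$ can be written as $\sum_s Q_s\ell_s$ with coefficients holomorphic on a fixed shrunken $U$ is false (the shrinking depends on the element). What your argument actually yields is only the inequality $\dim\mathrm{Sol}\le r$: the map $f\mapsto (P_1f,\dots,P_rf)$ embeds the local solution space of \eqref{general differential equation} into the $r$-dimensional space of solutions of the Pfaffian system (the vanishing of elements of $\mathfrak I\mathcal D$ applied to $f$ follows from the identity theorem, not from holomorphy of their coefficients). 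The reverse inequality --- that every horizontal section comes from a solution of $\mathfrak I$ --- is the real content of the theorem, and it requires knowing that your connection $(\mathcal O_U^{\,r},\nabla)$ is genuinely the localization of the $D_m$-module $D_m/\mathfrak I$, i.e.\ a lattice/generic-freeness statement: for a suitable polynomial $h$ one has $D_m[1/h]/(D_m[1/h]\cdot\mathfrak I)\cong\bigoplus_j \C[x][1/h]\,\overline{P_j}$ as $\C[x][1/h]$-modules, stable under $\pa_1,\dots,\pa_m$ with the matrices $A^{(i)}$. (This follows by clearing the finitely many denominators occurring in the relations $\pa_iP_j\equiv\sum_kA^{(i)}_{jk}P_k$ and in their expressions through $\ell_1,\dots,\ell_r$: the resulting $\C[x][1/h]$-span of the $P_j$ is $\pa$-stable and contains $1$, hence is the whole module, and a further localization makes it free of rank $r$.) With that lattice in hand, solutions of $\mathfrak I$ near a general point are exactly horizontal sections of the dual connection and Frobenius gives the equality; without it, your count is one-sided. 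For the particular system $\cF_C^{p,m}(a,B)$ the missing lower bound happens to be supplied independently by Proposition \ref{prop:fund-solutions}, but the Fact as stated concerns an arbitrary ideal $\mathfrak I$, so this step cannot be skipped.
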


The characteristic variety of $\mathfrak{I}$
is defined by 
$$
\ch(\mathfrak I) = \{(x,\xi)\in \C^{2m}\mid L(x,\xi)=0\  \textrm{ for any }
 L\in \init(\mathfrak I)\}.
$$

\begin{fact}[{\cite[Section 1.4]{SST}}]
\label{fact:sing-loc}
The singular locus $\sing\,(\mathfrak I)$ of $\mathfrak{I}$
%$\cF_C^{p,m}(a,B)$ 
is defined by the Zariski closure of 
$$
\pr_x (\ch(\mathfrak I)\setminus \{(x,\zero_m)
\in \C^{2m}\mid x\in \C^m\}),
$$
where %$\zero_m=(0,\dots,0)$ and 
$\pr_x:(x_1,\dots,x_m,\xi_1,\dots,\xi_m)\mapsto (x_1,\dots,x_m).$
The space of solutions form a local system on the complement 
of the singular locus. 
\end{fact}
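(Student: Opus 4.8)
The first sentence of the statement is merely the definition of $\sing(\mathfrak{I})$ and calls for no argument; the substantive assertion is the last sentence, that on $\C^m\setminus\sing(\mathfrak{I})$ the sheaf of holomorphic solutions of the system associated with $\mathfrak{I}$ is a local system. The plan is to recognize this complement as the \emph{non-characteristic locus} of the $\mathcal{D}$-module attached to $\mathfrak{I}$ and then invoke the coherence half of the Cauchy--Kovalevskaya--Kashiwara theorem, following \cite[Section 1.4]{SST}.

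First I would pass from the Weyl algebra $D_m$ to the sheaf $\mathcal{D}$ of holomorphic linear differential operators on $\C^m$, and let $\mathcal{M}$ be the coherent left $\mathcal{D}$-module associated with $\mathfrak{I}$ (the analytic counterpart of $D_m/\mathfrak{I}$). Its characteristic variety, as a subvariety of $\C^{2m}$, is the common zero set of $\init(\mathfrak{I})$, i.e.\ precisely $\ch(\mathfrak{I})$. By the definition of $\sing(\mathfrak{I})$, for every $x_0$ lying outside $\sing(\mathfrak{I})$ the fibre $\ch(\mathfrak{I})\cap(\{x_0\}\times\C^m)$ reduces to $\{(x_0,\zero_m)\}$; hence over $U:=\C^m\setminus\sing(\mathfrak{I})$ the characteristic variety of $\mathcal{M}$ is the zero section.

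Next I would apply Kashiwara's theorem that a $\mathcal{D}$-coherent module whose characteristic variety lies in the zero section is $\mathcal{O}$-coherent. This makes $\mathcal{M}|_U$ an $\mathcal{O}_U$-coherent $\mathcal{D}_U$-module on the complex manifold $U$, hence automatically a holomorphic vector bundle $\mathcal{E}$ equipped with an integrable connection $\nabla$; its rank equals the generic dimension $\dim_{\C(x)}\big(\C(x)[\xi]/(\C(x)[\xi]\cdot\init(\mathfrak{I}))\big)=\rank(\mathfrak{I})$ of Fact~\ref{fact:rank}, which is $p^m$ by Theorem~\ref{th:rank}. The solution sheaf $\operatorname{Hom}_{\mathcal{D}_U}(\mathcal{M},\mathcal{O}_U)$ is then the sheaf of flat sections of the dual connection on $\mathcal{E}^{\vee}$, which the Frobenius integrability theorem identifies with a local system of rank $\rank(\mathfrak{I})=p^m$, the higher solution sheaves $\operatorname{Ext}^j$ all vanishing. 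Combined with Theorem~\ref{th:sing-loc}, this shows that the solutions of $\cF_C^{p,m}(a,B)$ form a local system of rank $p^m$ on $\C^m\setminus\{x_1\cdots x_m R(x)=0\}$.

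The only step that is not formal is Kashiwara's coherence theorem (equivalently, the Cauchy--Kovalevskaya--Kashiwara theorem); granted it, the reduction to a flat connection and the passage to flat sections are routine. For the concrete system at hand one can also bypass the general $\mathcal{D}$-module machinery: the $p^m$ explicit functions $\Phi_J(a,B;x)$ of Proposition~\ref{prop:fund-solutions}, together with the bound $\rank\le p^m$ from Theorem~\ref{th:rank}, exhibit the solution sheaf as free of rank $p^m$ on a neighborhood of the origin in $U$; since the entries of the associated Pfaffian matrix are rational with poles confined to $\sing(\mathfrak{I})$, that connection extends holomorphically over all of $U$, so its flat sections form a local system there. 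I expect the genuinely delicate point in this second route to be verifying that the Pfaffian matrix is singular \emph{only} along $\sing(\mathfrak{I})$, which is exactly where the rank and singular-locus computations of Theorems~\ref{th:rank} and~\ref{th:sing-loc} come in.
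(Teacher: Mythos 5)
This statement is quoted in the paper as a Fact with a citation to \cite[Section 1.4]{SST}; the paper gives no proof of it, and the standard proof in that reference is exactly the one you sketch (characteristic variety contained in the zero section over the complement of $\sing(\mathfrak I)$, then Kashiwara's coherence/Cauchy--Kovalevskaya--Kashiwara theorem to get an integrable connection whose flat sections form the local system). Your first route is therefore correct and essentially the same as the cited source's; the appeals to Theorem~\ref{th:rank} and Theorem~\ref{th:sing-loc} are unnecessary for this general fact, and the alternative Pfaffian route is not needed.
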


\subsection{Hypergeometric differential equations and abelian covering}
Now we apply the above method to the study of the system of
hypergeometric differential equations.
We define $m$ elements
\begin{equation}
\label{eq:ell-k}
\ell_k
= \prod_{i=1}^p (x_k\pa_k +b_{ik}-1)- x_k \prod_{i=1}^p 
\left( \sum_{j=1}^m x_j\pa_j + a_i \right)
% = \prod_{i=1}^p (\theta_k +b_{ik}-1)- x_k \prod_{i=1}^p 
% \left( \sum_{j=1}^m \theta_j + a_i \right)
\end{equation}
in $D_m$ for $k=1,\dots,m$, and  a left ideal  
\begin{equation}
\label{eq:ideal-I}
\mathfrak{I}=(\ell_1,\dots,\ell_m)
\end{equation}
of $D_m$ generated by $\ell_1,\dots,\ell_m$.
By Lemma \ref{lem:HGDE}, the hypergeometric system 
$\cF_C^{p,m}(a,B)$ corresponds to the ideal $\mathfrak{I}$
as in the previous subsection.
%under the correpondence written in the last subsection.
%Hereafter, this ideal $(\ell_1,\dots,\ell_m)$ is denoted by 

% symbol
% \[
% L_k = \init_{(0,1)}\,(\ell_k) = (x_k \xi_k)^p - x_k \left( \sum_{j=1}^m x_j \xi_j \right)^p
% \]

We consider a ramified abelian covering 
$$
\varphi:\C^m_z=\C^m\ni (z_1, \dots, z_m)\mapsto 
(x_1, \dots, x_m)=(z_1^p, \dots, z_m^p)\in \C^m_x=\C^m
$$
of $\C^m_x$ to decompose the polynomial $R(x)$ in to a product of
linear functions. That is
%Actually, the pull back $R(z)$ of $R(x)$ %in \eqref{eq:R(x)} 
%under the map $\f$ is equal to
\begin{equation}
\label{eq:R(z)}
R(z)=\prod_{(i_1,\dots,i_m)\in (\Z_p)^m}
(1-\z_p^{i_1}z_1-\cdots -\z_p^{i_m}z_m).
\end{equation}
We set
\begin{equation}
\label{eq:sing-locus}
S(z)=\{z\in \C_x^m\mid z_1\cdots z_m R(z)=0\}.
\end{equation}
%where $\C^m_x$ and $\C^m_z$ are $\C^m$ with $(x_1,\dots,x_m)$ coordinates
%and with $(z_1,\dots,z_m)$ coordinates. 
The covering transformation group of $\f$ is generated by
$$
\sigma_k:(z_1,\dots, z_k,\dots, z_m)\mapsto (z_1, \dots, \zeta_p z_k,\dots, z_m)
\quad (k=1,\dots,m).
$$
% We set $x_k=z_k^p$ for $k=1, \dots, m$.
Since $\C(z_1,\dots,z_m)/\C(x_1,\dots,x_m)$ is a Galois extension 
with the Galois group $(\Z_p)^m$,  
% acting on $\C_z^p$ as 
% $$
% (\Z_p)^m\ni (i_1,\dots,i_m): 
% (z_1,\dots,z_m)\mapsto  
% (\zeta_p^{i_1}z_1,\dots,\zeta_p^{i_m}z_m),
% $$ 
$R(x)$ is irreducible in $\C[x_1,\dots,x_m]$. 
The hypersurface $R(x)=0$ is unirational, 
since it is covered by a rational variety
parameterized by $(z_1,\dots,z_{m-1})\in \C^{m-1}$
under the morphism
$$
(x_1,\dots,x_{m-1},x_m)=(z_1^p,\dots,z_{m-1}^p,(1-z_1-\cdots-z_{m-1})^p).
$$

% \subsection{A ramified abelian covering of $\C^m$}
% We consider a ramified abelian covering
% $$
% \varphi:\C^m_z\ni (z_1, \dots, z_m)\mapsto 
% (x_1, \dots, x_m)=(z_1^p, \dots, z_m^p)\in \C^m_x,
% $$
% where $\C^m_x$ and $\C^m_z$ are $\C^m$ with $(x_1,\dots,x_m)$ coordinates
% and with $(z_1,\dots,z_m)$ coordinates. 
% The covering transformation group of $\f$ is generated by
% $$
% \sigma_k:(z_1,\dots, z_k,\dots, z_m)\mapsto (z_1, \dots, \zeta z_k,\dots, z_m)
% \quad (k=1,\dots,m).
% $$
%%%%%%%%%%%%%%%%%%%%%%%%%%%%%%%%%%%%%%%%
To show Theorems \ref{th:rank} and \ref{th:sing-loc}, 
we introduce localized Weyl algebras $D_m^*$ and $\wt{D}_m^*$ 
on $x_1, \dots, x_m$ and $z_1, \dots, z_m$,
and a homomorphism 
$\f_*:D_m^*\to \wt{D}_m^*$ 
of non-commutative $\C$-algebras.
The localized Weyl algebra 
$D_m^*=\C\la x_1^{\pm},\dots,x_m^{\pm},\pa_1,\dots,\pa_m\ra$
is defined as a non-commutative $\C$-algebra generated
by $x_1^\pm,\dots,x_m^\pm $ and %the partial defferential operators 
$\pa_1,\dots,\pa_m$ with relations extending  \eqref{eq:rel-Weyl}.
%%%%%%%%%%%%%%%%%%%%%%%%%%%%%%%%%%%%%%%%%%%%%%%%%%%%%%%%%%%%%%%%%%%%%%%%
% symbol
% \[
% L_k = \init_{(0,1)}\,(\ell_k) = (x_k \xi_k)^p - x_k \left( \sum_{j=1}^m x_j \xi_j \right)^p
% \]
%To show that the rank of $\cF_C^{p,m}(a,B)$ is equal to $p^m$,
%We consider the Weyl algebra 
%$\wt{D}_m=\C\la z_1,\dots,z_m;\wt{\pa}_1,\dots,\wt{\pa}_m\ra$ 
We similarly define the Weyl algebra 
$\wt{D}_m=\C\la z_1,\dots,z_m,\wt{\pa}_1,\dots,\wt{\pa}_m\ra$ 
and the localized Weyl algebra 
$\wt{D}_m^*=\C\la z_1^\pm,\dots,z_m^\pm,\wt{\pa}_1,\dots,\wt{\pa}_m\ra$
on $z_1,\dots,z_m$. 
Since the map $\varphi^*$ satisfies 
$$
\f^*(x_k)=z_k^p,\quad  
%\wt{\pa_k}\circ \f^*=pz^k \f^*\circ \pa_k, 
\f^*\circ \pa_k =\frac{1}{pz^k }\wt{\pa_k}\circ \f^*,
%\red{???\varphi_*^{-1} ???}
$$
%\red{??? Should we set $\varphi_*^{-1}$ by the local homeomorphism $\f$ ???}
and preserves the relations defining $D_m^*$ and $\wt{D}_m^*$, 
it gives rise to a homomorphism of non-commutative algebras
from ${D}_m^*$ to $\wt{D}_m^*$.
%which is a non-commutative ring generated by the polynomial ring 
%$\C[z_1,\dots,z_m]$ and %the partial defferential operators 
%$\pa_1,\dots,\pa_m$ with relations  
% \begin{equation}
% \label{eq:rel-Weyl}
% \pa_i\pa_j=\pa_j\pa_i,\quad \pa_i z_j=z_j\pa_i+\delta_{i,j},
% \end{equation}
% for $1\le i,j\le m$. 
% It naturally acts on holomorphic functions around 
% any point in $\C^m$. 

Let $\mathfrak{I}^*$ be the left ideal in $D_m^*$ generated by 
$\ell_1,\dots,\ell_m$ given in \eqref{eq:ell-k}.  
Let $\wt{\ell}_k$ denote the images of $\ell_k\in D_m^*$ under the map $\f^*$.
Then we have 
\begin{equation}
\label{eq:f-ell-k}
\wt{\ell}_k=\prod_{i=1}^p (\frac{1}{p}z_k\wt{\pa}_k +b_{ik}-1)- 
z_k^p \prod_{i=1}^p \left( \frac{1}{p}\sum_{j=1}^m z_j\wt{\pa}_j + a_i \right) 
\end{equation}
for $k=1,\dots,m$. 
Note that $\wt{\ell}_k\in \wt{D}_m^*$ belongs to the Weyl algebra 
$\wt{D}_m$.  
We set 
\begin{equation}
\label{eq:SDE in Dm}
\wt{\mathfrak{I}}=(\wt{\ell}_1,\dots, \wt{\ell}_m),
\end{equation} 
which is  the left ideal of $\wt{D}_m$ 
generated by $\wt{\ell}_1,\dots, \wt{\ell}_m$.

To study the rank and the singular locus of the systems of differential
equations associated to $\mathfrak I$ and $\wt{\mathfrak I}$,
we consider the relation between the homomorphism $\f^*$ and
the pull back of a holomorphic function on an open set of $\C^m_x$.
Let $U_x$ be a contractible open set in %$(\C_x-\{0\})^m$
$(\C_x^{\times})^m=\{x\in \C_x^m\mid x_1\cdots x_m\ne0\}$ 
and let $U_z$ be a connected component of $\f^{-1}(U_x)$.
Then the restriction $\f_U$ of $\f$ to $U_z$ is biholomorphic.
Let  $f(x)$ be a holomorphic function on $U_x$ and 
$\varphi_U^*(f(x))$
be its pull back by $\varphi_U$. Then for an element $\ell\in D_m^*$, 
we have
\begin{equation}
\label{pull back of hol fun and diff op}
\varphi^*(\ell)\cdot \varphi_U^*(f(x))=\varphi_U^*(\ell\cdot f(x)).
\end{equation}

As a consequence, we have the following proposition.
\begin{proposition}
\label{same rank by covering}
Let $z$ be any point in $(\C_z^{\times})^m=\{z\in \C_z^m\mid z_1\cdots z_m\ne 0\}$. 
Then we have an isomorphism between
the space of solutions to 
$\mathfrak I$ around $\f(z)$ and that to $\wt{\mathfrak I}$ 
around $z$.
As a consequence, the ranks of $\mathfrak I$ and $\wt{\mathfrak I}$ 
are the same. 
\end{proposition}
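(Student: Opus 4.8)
The plan is to exhibit the two local solution spaces as corresponding fibres of the covering $\f$, using the intertwining relation \eqref{pull back of hol fun and diff op}, and then to read off the equality of ranks from Fact~\ref{fact:rank}. First I would fix $z\in(\C_z^{\times})^m$, set $x=\f(z)$, choose a contractible open neighbourhood $U_x\subset(\C_x^{\times})^m$ of $x$, and let $U_z$ be the connected component of $\f^{-1}(U_x)$ containing $z$, so that the restriction $\f_U$ of $\f$ to $U_z$ is biholomorphic (as recalled just before the statement). Pull-back along $\f_U$ is then a $\C$-linear bijection $\varphi_U^{*}$ from the holomorphic functions on $U_x$ onto those on $U_z$, with inverse $g\mapsto g\circ\varphi_U^{-1}$.

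Next I would apply \eqref{pull back of hol fun and diff op} with $\ell=\ell_k$, which gives $\wt{\ell}_k\cdot\varphi_U^{*}(f)=\varphi_U^{*}(\ell_k\cdot f)$ for every holomorphic $f$ on $U_x$ and every $k$. Since $\mathfrak I=(\ell_1,\dots,\ell_m)$ and $\wt{\mathfrak I}=(\wt{\ell}_1,\dots,\wt{\ell}_m)$, a holomorphic function on $U_x$ solves the system $\mathfrak I$ iff it is annihilated by each $\ell_k$, and likewise $g$ on $U_z$ solves $\wt{\mathfrak I}$ iff it is annihilated by each $\wt{\ell}_k$. Injectivity of $\varphi_U^{*}$ then shows that $\ell_k\cdot f=0$ for all $k$ iff $\wt{\ell}_k\cdot\varphi_U^{*}(f)=0$ for all $k$; hence $\varphi_U^{*}$ restricts to a $\C$-linear isomorphism between the solution space of $\mathfrak I$ on $U_x$ and the solution space of $\wt{\mathfrak I}$ on $U_z$. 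Passing to arbitrarily small $U_x$ (equivalently, to the stalks at $x$ and at $z$) yields the claimed isomorphism of local solution spaces around $\f(z)$ and around $z$.

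For the equality of ranks I would invoke Fact~\ref{fact:rank}, which identifies $\rank(\mathfrak I)$ with the dimension of the local solution space at a general point of $\C_x^m$ and $\rank(\wt{\mathfrak I})$ with the dimension of the local solution space at a general point of $\C_z^m$. Let $W$ and $V$ be the dense Zariski-open subsets of $(\C_x^{\times})^m$ and $(\C_z^{\times})^m$, respectively, on which these generic dimensions are attained (a general point lies in the torus, so restricting to it is harmless). Because $\f$ restricts to a covering map $(\C_z^{\times})^m\to(\C_x^{\times})^m$, it is open there, so $\f(V)$ is a dense open subset of $(\C_x^{\times})^m$; hence $W\cap\f(V)\ne\emptyset$. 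Choosing $x\in W\cap\f(V)$ and $z\in V$ with $\f(z)=x$, the isomorphism above gives
$$
\rank(\mathfrak I)=\dim_{\C}\bigl(\text{solutions of }\mathfrak I\text{ near }x\bigr)
=\dim_{\C}\bigl(\text{solutions of }\wt{\mathfrak I}\text{ near }z\bigr)=\rank(\wt{\mathfrak I}).
$$

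The argument is essentially formal once \eqref{pull back of hol fun and diff op} is in hand; the only step that needs a little care is the last paragraph, where one must select a point $x$ that is generic for $\mathfrak I$ and has a preimage $z$ under $\f$ that is generic for $\wt{\mathfrak I}$. I expect this to be the main — and quite minor — obstacle, and it is handled precisely by the openness of $\f$ on the tori, which keeps dense open sets dense after taking images and preimages.
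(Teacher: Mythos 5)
Your proposal is correct and follows essentially the same route as the paper: the paper states this proposition as an immediate consequence of the local biholomorphism $\f_U$ on a contractible $U_x\subset(\C_x^{\times})^m$ together with the intertwining relation \eqref{pull back of hol fun and diff op}, which is exactly the isomorphism of local solution spaces you construct. Your extra care in the last paragraph (choosing a point simultaneously generic for $\mathfrak I$ and, via openness of the covering on the tori, for $\wt{\mathfrak I}$) is a reasonable elaboration of the rank statement that the paper leaves implicit.
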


%\subsection{Definition of $\wt{\mathcal I}$}

We consider the associate graded ring of $\wt{D}_m^*$ defined by
$$
\gr(\wt{D}_m^*)=\C[z^\pm,\frac{1}{R},\wt{\xi}]=
\C[z_1^\pm,\dots,z_m^\pm,\frac{1}{R(z)},\wt{\xi}_1,\dots,\wt{\xi}_m],
$$ 
which is the localization of $\gr(\wt{D}_m)=\C[z,\wt{\xi}]$ by $z_1,\dots,z_m$ 
and $R=R(z)$ 
in \eqref{eq:R(z)}.
We set 
% Note that the initial symbol $L_k$ of $\ell_k$ 
% with respect to the weight vector $(0,\dots,0,1,\dots,1)\in \Z^{2m}$ by 
\begin{equation}
\label{eq:Lk}
\wt{L}_k = p^p\init^p(\wt{\ell}_k) = (z_k \wt{\xi}_k)^p - z_k^p  
\left( \sum_{j=1}^m z_j \wt{\xi}_j \right)^p.
\end{equation}
The ideal of $\C[z^\pm,\frac{1}{R},\wt\xi]$ generated by 
$\wt{L}_1,\dots,\wt{L}_m$ is denoted by $\wt{\mathcal I}^*=(\wt{L}_1,\dots, \wt{L}_m)$.
Then we have
$$
\wt{\mathcal{I}}^*\subset \init(\wt{\mathfrak{I}})\otimes_{\C[z,\wt\xi]}
\C[z^\pm,\frac{1}{R},\wt\xi].
$$
By setting 
$$
\A=\C[z^\pm,\frac{1}{R}]=\C[z_1^\pm,\dots,z_m^\pm,\frac{1}{R(z)}],
$$ 
we regard $\C[z^\pm,\frac{1}{R},\wt\xi]$ as 
an $\A$-algebra equipped with the grading by the total degree 
with respect to $\wt{\xi}_1, \dots, \wt{\xi}_m$.
Since the ideal $\wt{\mathcal I}^*$ is a homogeneous ideal with respect
to the grading, the quotient ring
$
\C[z^\pm,\frac{1}{R},\wt\xi]/\wt{\mathcal I}^*
$ 
becomes a graded $\A$-algebra. The following theorem is a key to show
Theorems \ref{th:rank} and \ref{th:sing-loc}, which will be proved in the next
subsection.
\begin{theorem}
\label{locally free of exact rank}
The quotient ring 
$\C[z^\pm,\frac{1}{R},\wt\xi]/\wt{\mathcal I}^*$
is a locally free $\A$-module of finite rank $p^m$.
\end{theorem}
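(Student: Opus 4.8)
The goal is to show that $R:=\C[z^\pm,1/R,\wt\xi]/\wt{\mathcal I}^*$ is a locally free $\A$-module of rank $p^m$, where $\wt{\mathcal I}^*=(\wt L_1,\dots,\wt L_m)$ and $\wt L_k=(z_k\wt\xi_k)^p-z_k^p(\sum_j z_j\wt\xi_j)^p$. Since $\C[z^\pm,1/R,\wt\xi]=\A[\wt\xi_1,\dots,\wt\xi_m]$ is a polynomial ring in $m$ variables over $\A$, and $\wt{\mathcal I}^*$ is generated by exactly $m$ homogeneous elements of degree $p$ in the $\wt\xi_i$, the natural strategy is to prove that $\wt L_1,\dots,\wt L_m$ form a \emph{regular sequence} in $\A[\wt\xi]$. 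Once that is established, a Koszul/Hilbert-series computation gives that the quotient is a graded $\A$-module whose Hilbert series (fibrewise) is $\big((1-t^p)/(1-t)\big)^m=(1+t+\cdots+t^{p-1})^m$, hence finite of total rank $p^m$; flatness over $\A$ then upgrades ``finite free on each fibre'' to ``locally free of rank $p^m$'' by the standard criterion (finitely generated graded module over a domain, with constant fibre dimension, is locally free). The paper has already signalled exactly this route (``change generators to $\wt M_1,\dots,\wt M_m$ and show they form a regular sequence''), so the real content is the regular-sequence claim.

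**Reducing to a geometric statement.** For a sequence of $m$ homogeneous elements in a polynomial ring in $m$ variables over a domain, being a regular sequence is equivalent (by Krull's height theorem / the unmixedness of complete intersections) to the common zero locus in the fibre having the expected codimension $m$, i.e. being $\{0\}$ in each $\overline{\A}$-point. Concretely: for every closed point $z^0\in\operatorname{Spec}\A$ — that is, every $z^0\in(\C^\times)^m$ with $R(z^0)\ne0$ — one must show that the only solution $\xi\in\C^m$ of
\[
(z_k^0\xi_k)^p=(z_k^0)^p\Big(\sum_{j=1}^m z_j^0\xi_j\Big)^p\qquad(k=1,\dots,m)
\]
is $\xi=0$. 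Writing $u_k=z_k^0\xi_k$ and $s=\sum_k u_k$, the system becomes $u_k^p=(z_k^0)^p\,s^p$ for all $k$, i.e. $u_k=\zeta_p^{i_k} z_k^0\, s$ for some $i_k\in\Z_p$ (when $s\ne 0$). Summing, $s=\big(\sum_k \zeta_p^{i_k} z_k^0\big) s$, so either $s=0$ — forcing every $u_k^p=0$, hence $u=0$, hence $\xi=0$ — or $\sum_k \zeta_p^{i_k} z_k^0=1$ for some $(i_1,\dots,i_m)\in(\Z_p)^m$, which says precisely that $R(z^0)=0$ by the product formula \eqref{eq:R(z)}. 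Since we have inverted $R$, this case is excluded, so $\xi=0$. This is the heart of the argument and the place where the definition of $R(z)$ is used in an essential way; I expect the bookkeeping around the case $s=0$ (checking $u_k^p=0\Rightarrow u_k=0\Rightarrow\xi_k=0$, using $z_k^0\ne 0$) and the parametrization by $(\Z_p)^m$ to be the only mildly delicate points, but nothing deep.

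**Assembling the proof.** I would organize it as follows. (i) Observe $\A[\wt\xi]$ is a free, hence flat, $\A$-module, and $\wt{\mathcal I}^*$ is generated by the $m$ homogeneous elements $\wt L_k$. (ii) Prove the vanishing statement above: for every geometric point of $\operatorname{Spec}\A$ the $\wt L_k$ cut out $\{0\}$ in $\mathbb A^m_{\wt\xi}$; deduce that the fibre ring $k(z^0)[\wt\xi]/(\wt L_k)$ is Artinian and, being a graded complete intersection of $m$ forms of degree $p$ in $m$ variables, is free of dimension $p^m$ with Hilbert series $(1+t+\cdots+t^{p-1})^m$. (iii) Conclude that the $\wt L_k$ form a regular sequence in $\A[\wt\xi]$ (a sequence of $m$ homogeneous elements in the $m$-variable polynomial ring over the Cohen–Macaulay ring $\A$ whose zero scheme is finite over $\operatorname{Spec}\A$ is regular; equivalently, the associated Koszul complex is acyclic), so the Koszul complex on $\wt L_1,\dots,\wt L_m$ is a finite free resolution of the quotient over $\A[\wt\xi]$. (iv) Since the Hilbert polynomial / fibre dimension $p^m$ is constant over the (reduced, irreducible) base $\operatorname{Spec}\A$ and the quotient is a finitely generated graded $\A$-module that is flat (being resolved by finite frees together with constant fibre rank), it is locally free of rank $p^m$; this is exactly the statement of Theorem \ref{locally free of exact rank}. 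The main obstacle, then, is purely step (ii)–(iii): converting the clean elimination computation into the regular-sequence conclusion and hence into flatness; everything after that is formal. I do not anticipate needing the ``change of generators to $\wt M_k$'' beyond possibly streamlining the regular-sequence verification, so I would present step (ii) as the key lemma and keep the rest short.
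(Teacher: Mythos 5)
Your proposal is correct, and it shares the paper's overall strategy (regular sequence of the $m$ symbol generators, Hilbert series $(1+t+\cdots+t^{p-1})^m$, hence rank $p^m$), but the technical execution is genuinely different. The paper first replaces $\wt L_1,\dots,\wt L_m$ by $M_k=\wt\xi_k^p-\wt\xi_m^p$ ($k<m$) and $M_m=\wt\xi_m^p-(\sum_j z_j\wt\xi_j)^p$, builds the tower $\A_0\subset\A_1\subset\cdots\subset\A_{m-1}$ with $\A_k=\A_{k-1}[\wt\xi_k]/(M_k)$ free over $\A_{k-1}$, and proves that $M_m$ is a non-zero divisor by embedding $\A_{m-1}$ into $(\A_0)^{p^{m-1}}$ via the evaluation maps $\wt\xi_k\mapsto\chi_k\wt\xi_m$; the component of $M_m$ indexed by $\widehat\chi$ is $\bigl(1-(\chi_1z_1+\cdots+z_m)^p\bigr)\wt\xi_m^p$, whose coefficient is a unit in $\A$ precisely because $R$ has been inverted. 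Local freeness is then obtained by an explicit Nakayama-type lemma applied to multiplication by $M_m$ on graded pieces. You instead keep the original generators, show that the fibrewise zero locus of the $\wt L_k$ over any point with $R(z^0)\neq0$ is $\{0\}$ (your elimination $u_k=\zeta_p^{i_k}z_k^0 s$, $\sum_k\zeta_p^{i_k}z_k^0=1$ being exactly a factor of $R$), and then invoke unmixedness in the Cohen--Macaulay ring $\A[\wt\xi]$ to get the regular sequence, plus the constant-fibre-dimension criterion over the reduced base for local freeness. The underlying computation is the same in both arguments --- the invertibility of $1-\sum_k\zeta_p^{i_k}z_k$ on $\operatorname{Spec}\A$ --- but your route leans on standard commutative-algebra machinery (height equals grade in CM rings, fibre dimension and flatness), whereas the paper's is more elementary and self-contained, exhibiting explicit free bases at each stage; the paper's explicitness is also what it later exploits when bounding the characteristic variety and aiming at Pfaffian systems. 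Your steps (iii)--(iv) are stated as known facts rather than proved, so a complete write-up would need to supply the graded Koszul/unmixedness argument and the local criterion of flatness, but there is no gap in the mathematics.
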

%the following arguement are not necessary.
%The following arguments are not necessary. 
%The map $\f_*:D_m^*\to \wt{D}_m^*$ i%s compatible with 
%the filtrations and induces a map
%$\C[x^{\pm},\xi] \to \C[z^{\pm},\wt\xi]$ which is also denoted by $\f_*$.
%The the definition of $\wt{\mathcal I}$, we have}

\subsection{The structure of the characteristic variety}
Let $\mu_p=\{\zeta_p^j\mid 0\leq j\leq p-1\}$ 
be the set of $p$-th roots of unity.
To prove Theorem \ref{locally free of exact rank}, 
we prepare some propositions and a lemma. 
We set 
\begin{align*}
&M_k 
=\wt{\xi}_k^p-\wt{\xi}_m^p=\prod_{\chi \in \mu_p}
(\wt{\xi}_k-\chi\wt{\xi}_m),\quad (1\leq k\leq m-1),
\\
&M_m  = \wt{\xi}_m^p -  
\left( \sum_{j=1}^m z_j \wt{\xi}_j \right)^p
=\prod_{\chi\in \mu_p}(\wt{\xi}_m-\chi(\sum_{j=1}^m z_j \wt{\xi}_j)).
\end{align*}
Then it is easy to see that
$$
\wt{\mathcal I}=(M_1, \dots,M_{m-1},M_m).
$$
We set $\A_0=\A[\wt{\xi}_m]$ and define an $\A$-algebra $\A_k$ by 
$$
\A_k=\A[\wt{\xi}_m,\wt{\xi}_1, \dots, \wt{\xi}_k]/(M_1,\dots,M_k)
$$
for $k=1, \dots, m-1$. Since $M_1, \dots, M_m$ are homogeneous with respect to 
$\wt{\xi}_1,\dots,\wt{\xi}_m$, the ideal $(M_1,\dots,M_k)$ is a homogeneous ideal of
$\A[\wt{\xi}_m,\wt{\xi}_1, \dots, \wt{\xi}_k]$, 
and the grading on $\A[\wt{\xi}_m,\wt{\xi}_1, \dots, \wt{\xi}_k]$
induces that on $\A_k$.
Since $M_k={\wt\xi_k}^p-{\wt\xi_m}^p$ is a monic polynomial in $\A_{k-1}[\wt{\xi}_{k}]$, 
it is a non-zero divisor.
As a consequence,
the elements $M_1, \dots, M_k$ form a regular sequence of 
$\A[\wt{\xi}_1,\dots, \wt{\xi}_k]$ for $k=1, \dots, m-1$.

We have a natural isomorphism
$\A_{k}\simeq \A_{k-1}[\wt{\xi}_{k}]/(M_{k})$
%between $\A_{k}$ and $\A_{k-1}[\wt{\xi}_{k}]/(M_{k})$
for $k=1,\dots, m-1$, which is compatible with the grading.

\begin{proposition}
\begin{enumerate}
 \item 
The $A$-algebra $\A_{m-1}$ becomes 
a free $\A_0$-module of rank $p^{m-1}$.
\item
The degree $d$-part $\A_{m-1}^{(d)}$ of $\A_{m-1}$ is a free $\A$-module of 
finite rank $r_{m-1,d}$, where $r_{m-1,d}$ is defined by 
the coefficient of $t^d$ of the power series expansion for 
$\dfrac{(1-t^p)^{m-1}}{(1-t)^m}$, i.e., 
\begin{equation}
\label{gen fn upto m-1} 
\sum_{i=0}^{\infty}r_{m-1,d}t^d=\dfrac{(1-t^p)^{m-1}}{(1-t)^m}. 
\end{equation}

\end{enumerate}
\end{proposition}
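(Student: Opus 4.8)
The plan is to build $\A_{m-1}$ by the iterated extensions $\A_k \simeq \A_{k-1}[\wt\xi_k]/(M_k)$ and exploit that each $M_k = \wt\xi_k^p - \wt\xi_m^p$ is monic of degree $p$ in the new variable $\wt\xi_k$ over $\A_{k-1}$. First I would establish part (1) by induction on $k$: the base case $\A_0 = \A[\wt\xi_m]$ is free over itself, and for the inductive step, since $M_k$ is a monic polynomial of degree $p$ in $\wt\xi_k$ with coefficients in $\A_{k-1}$, the quotient $\A_k = \A_{k-1}[\wt\xi_k]/(M_k)$ is a free $\A_{k-1}$-module with basis $1, \wt\xi_k, \dots, \wt\xi_k^{p-1}$. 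Composing these, $\A_{m-1}$ is free over $\A_0$ with the monomial basis $\{\wt\xi_1^{e_1}\cdots\wt\xi_{m-1}^{e_{m-1}} \mid 0 \le e_i \le p-1\}$, which has $p^{m-1}$ elements; this gives the rank $p^{m-1}$ claimed. The fact that $M_1,\dots,M_{m-1}$ form a regular sequence, already noted in the excerpt, is exactly what guarantees the quotients don't collapse and the bases stay free.

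For part (2), I would track the grading through the same induction. The ring $\A_0 = \A[\wt\xi_m]$ has degree-$d$ part a free $\A$-module of rank $1$ for every $d \ge 0$, so its Poincaré series (as a formal power series in $t$ recording $\A$-ranks of graded pieces) is $\tfrac{1}{1-t}$. Passing from a graded ring $R$ to $R[\wt\xi_k]$ multiplies the Poincaré series by $\tfrac{1}{1-t}$ (adjoining one polynomial variable of degree $1$), and then quotienting by the monic degree-$p$ homogeneous element $M_k$ — which is a non-zero divisor, as recorded — multiplies by $(1-t^p)$, since a short exact sequence $0 \to R[\wt\xi_k](-p) \xrightarrow{M_k} R[\wt\xi_k] \to \A_k \to 0$ of graded modules is exact in each degree. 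Hence after $m-1$ such steps starting from $\tfrac{1}{1-t}$, the Poincaré series of $\A_{m-1}$ is
$$
\frac{1}{1-t}\cdot\Big(\frac{1-t^p}{1-t}\Big)^{m-1} = \frac{(1-t^p)^{m-1}}{(1-t)^m},
$$
which is precisely the generating function in \eqref{gen fn upto m-1}. The coefficient of $t^d$ is therefore the $\A$-rank $r_{m-1,d}$ of the degree-$d$ part $\A_{m-1}^{(d)}$, and each such part is free over $\A$ because it is a direct sum of the free rank-one pieces of $\A_0$ appearing in the monomial basis of appropriate total degree.

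The main point requiring care is the exactness of the Koszul-type sequence in each graded degree when passing from $\A_{k-1}[\wt\xi_k]$ to $\A_k$: one must check that multiplication by $M_k$ is injective on $\A_{k-1}[\wt\xi_k]$, which holds because $M_k$ is monic in $\wt\xi_k$ and hence a non-zero divisor over the (commutative, but not necessarily domain) ring $\A_{k-1}$ — and this is exactly the regular-sequence observation already made in the text. Beyond that, the argument is a bookkeeping of graded ranks along a tower of monogenic extensions; no substantive obstacle remains once the regular-sequence property is in hand. Setting $t=1$ formally (or rather noting the total rank is the value of the numerator-cleared count) recovers $p^{m-1}$, consistent with part (1).
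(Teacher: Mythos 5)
Your proof is correct and takes essentially the same approach as the paper: part (1) is the same tower of monogenic free extensions $\A_k=\A_{k-1}[\wt\xi_k]/(M_k)$ with monomial basis, and part (2) — which the paper dispatches as an immediate consequence of (1) by counting basis monomials of degree $d$ — you obtain equivalently via the Poincar\'e series of the graded short exact sequences, arriving at the same generating function $(1-t^p)^{m-1}/(1-t)^m$.
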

\begin{proof}
(1) By using the inductive relation
$$
\A_{k}=\A_{k-1}[\wt{\xi}_{k}]/(M_{k})=\bigoplus_{i=0}^{p-1} \wt{\xi}_k^i\A_{k-1},
$$ 
we have
$$
\A_{m-1}=\bigoplus_{0\leq i_1,\dots, i_{m-1}\leq p-1}
\wt{\xi}_1^{i_1}\cdots \wt{\xi}_{m-1}^{i_{m-1}}\A_0. 
$$

\noindent
(2) The statement is a consequence of (1).
\end{proof}

In the following, we show that $M_m$ is a non-zero divisor in $\A_{m-1}$.
For $k=1,\dots, m-1$, it is easy to see 
that the map 
$$
\begin{matrix}
\phi_k: \A_k=\A_{k-1}[\wt{\xi}_{k}]/(M_{k}) &\xrightarrow{ } &
%\prod_{i=0}^{p-1}\A_{k-1} ??? 
(\A_{k-1})^{p}
\\
\text{\rotatebox{90}{$\in$}}
& &
\text{\rotatebox{90}{$\in$}}
\\
f(\wt{\xi}_{k})&\mapsto&
%(f(\wt{\xi}_m), f(\zeta_p\wt{\xi}_m), f(\zeta_p^2\wt{\xi}_m), \dots, f(\zeta_p^%{p-1}\wt{\xi}_m)) 
(f(\chi\wt{\xi}_m))_{\chi\in\mu_p}
\end{matrix}
$$
is well defined, and preserves the grading.
We have the following. % proposition.
\begin{lemma}
\label{injectivity and non zero div}
\begin{enumerate}
 \item 
For $k=1,\dots, m-1$,
$
\phi_k
$
is injective.
\item
The image $\phi_k(\wt{\xi}_m)$ of $\wt{\xi}_{m}\in A_k$ under $\phi_k$ 
is a non-zero divisor in $(A_{k-1})^p$.
\end{enumerate}
\end{lemma}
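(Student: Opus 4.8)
The plan is to reduce both assertions of the lemma to one fact, namely that $\wt\xi_m$ is a non-zero divisor in $\A_{k-1}$, and then to dispatch the injectivity in part~(1) by an elementary Vandermonde computation over $\C$.

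First I would record that non-zero divisor property. Iterating the decomposition $\A_j=\bigoplus_{i=0}^{p-1}\wt\xi_j^{\,i}\A_{j-1}$ used in the preceding Proposition shows that, for each $k$ with $1\le k\le m-1$, the ring $\A_{k-1}$ (which is $\A_0$ itself when $k=1$) is a \emph{free}, hence torsion-free, module over $\A_0=\A[\wt\xi_m]$. Since $\A=\C[z^{\pm},1/R]$ is an integral domain, $\A_0$ is a polynomial ring over a domain, so $\wt\xi_m$ is a non-zero divisor in $\A_0$; multiplication by a non-zero divisor of the base ring stays injective on a torsion-free module, hence $\wt\xi_m$, and a fortiori every power $\wt\xi_m^{\,i}$, is a non-zero divisor in $\A_{k-1}$.

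Part~(2) then falls out at once: since $\wt\xi_m\in\A_k$ does not involve $\wt\xi_k$, the defining formula for $\phi_k$ gives $\phi_k(\wt\xi_m)=(\wt\xi_m,\dots,\wt\xi_m)\in(\A_{k-1})^p$, and multiplication by this diagonal element is coordinate-wise multiplication by $\wt\xi_m$, which is injective by the previous paragraph. For part~(1), I would write $g\in\A_k=\A_{k-1}[\wt\xi_k]/(M_k)$ uniquely as $g=\sum_{i=0}^{p-1}c_i\wt\xi_k^{\,i}$ with $c_i\in\A_{k-1}$, using the free basis $1,\wt\xi_k,\dots,\wt\xi_k^{\,p-1}$ of $\A_k$ over $\A_{k-1}$. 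The $\chi$-component of $\phi_k(g)$ is $\sum_{i=0}^{p-1}(\wt\xi_m^{\,i}c_i)\chi^{\,i}$, so $\phi_k(g)$ is the image of the column vector $\big(\wt\xi_m^{\,i}c_i\big)_{0\le i\le p-1}\in(\A_{k-1})^p$ under the \emph{scalar} Vandermonde matrix $V=(\chi^{\,i})_{\chi\in\mu_p,\,0\le i\le p-1}$ with entries in $\C$. As the $p$-th roots of unity are pairwise distinct, $\det V$ is a nonzero complex number, so $V$ is invertible over $\C\subseteq\A_{k-1}$; hence $\phi_k(g)=0$ forces $\wt\xi_m^{\,i}c_i=0$ for every $i$, and then $c_i=0$ because $\wt\xi_m^{\,i}$ is a non-zero divisor in $\A_{k-1}$. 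Thus $g=0$ and $\phi_k$ is injective.

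The argument is essentially linear algebra over $\C$ together with the freeness already in hand, so I do not anticipate a genuine obstacle; the points that need care are to run part~(1) through the \emph{scalar} matrix $V\in M_p(\C)$ rather than appealing to ``a polynomial of degree $<p$ with $p$ roots'' (which would be false, since $\A_{k-1}$ is not an integral domain once $k\ge2$), and to notice that $\wt\xi_m$ remains a non-zero divisor in $\A_{k-1}$ by torsion-freeness over $\A_0$. With the lemma in hand, the intended continuation is to deduce that $M_m$ is a non-zero divisor in $\A_{m-1}$: composing the injections $\phi_{m-1},\dots,\phi_1$ embeds $\A_{m-1}$ into $(\A_0)^{p^{m-1}}$, and in the coordinate indexed by $(\chi_1,\dots,\chi_{m-1})\in\mu_p^{m-1}$ the element $M_m$ becomes $\wt\xi_m^{\,p}\prod_{\chi\in\mu_p}\big(1-\chi(\chi_1z_1+\cdots+\chi_{m-1}z_{m-1}+z_m)\big)$, each factor of which is one of the linear forms cut out by $R(z)$, hence a unit of $\A$; thus multiplication by $M_m$ is injective, and the Hilbert-series bookkeeping of the preceding Proposition then yields Theorem~\ref{locally free of exact rank}.
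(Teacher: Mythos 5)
Your proof is correct, but it reorganizes the argument in a way that genuinely differs from the paper. The paper proves (1) and (2) simultaneously by a double induction: the base case $(2_0)$ uses that $\A_0=\A[\wt{\xi}_m]$ is a polynomial ring over a domain; the step $(2_{k-1})\Rightarrow(1_k)$ shows that any $F\in\A_{k-1}[\wt{\xi}_k]$ vanishing at all $\z_p^i\wt{\xi}_m$ is divisible by $M_k$, via successive division by the linear factors $\wt{\xi}_k-\z_p^i\wt{\xi}_m$ (the evaluations $(\z_p^s-\z_p^0)\cdots(\z_p^s-\z_p^{s-1})\wt{\xi}_m^s$ being non-zero divisors by the inductive hypothesis); and then $(1_k)$ plus $(2_{k-1})$ give $(2_k)$. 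You instead extract the one substantive fact --- that $\wt{\xi}_m$ is a non-zero divisor in every $\A_{k-1}$ --- directly from the freeness of $\A_{k-1}$ over $\A_0$ (already available from the preceding proposition's decomposition $\A_j=\bigoplus_{i=0}^{p-1}\wt{\xi}_j^{\,i}\A_{j-1}$), which makes (2) immediate and lets you prove (1) by inverting the scalar Vandermonde matrix $(\chi^i)_{\chi\in\mu_p,\,0\le i\le p-1}$ over $\C$. This eliminates the double induction and is arguably cleaner; the paper's successive-division argument is essentially the same linear algebra carried out one root at a time, and has the mild advantage of not requiring the explicit basis representation. Your cautionary remarks are well placed: one cannot argue ``degree $<p$ with $p$ roots'' since $\A_{k-1}$ is not a domain for $k\ge 2$, and both your Vandermonde step and the paper's division step need the non-zero-divisor property of powers of $\wt{\xi}_m$ to conclude. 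Your closing sketch of how the lemma feeds into the non-vanishing of $C_{\widehat{\chi}}$ and hence into Theorem \ref{locally free of exact rank} matches the paper's Proposition \ref{regular seqence for Mm}.
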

\begin{proof}
The statements (1) and (2) for $k$ are denoted by ($1_k$)
and ($2_k$), respectively.
We prove that the statements hold by the double induction:
\begin{enumerate}
\renewcommand{\labelenumi}{\alph{enumi}).}
\item
($2_{0}$), 
 \item
($2_{k-1}$)$\Rightarrow$($1_k$), %and
\item
%($2_{k-1}$)+($1_k$)$\Rightarrow$($2_k$).
``($2_{k-1}$) and ($1_k$)''$\Rightarrow$($2_k$).
\end{enumerate}

\smallskip\noindent
%{\bf Proof of ($2_{0}$)} We show ($2_{0}$). (
a).  
Since $\A_0$ is the polynomial ring in $\wt{\xi}_m$ over the ring $\A$,
%it is obvious. 
($2_{0}$) obviously holds. 

\smallskip\noindent
%{\bf Proof of ($2_{k-1}$)$\Rightarrow$($1_k$)} We show that ($2_{k-1}$)$\Rightarrow$($1_k$). (
b).  Let $F(\wt{\xi}_k)$ be an element in $\A_{k-1}[\wt{\xi}_k]$
and suppose that $F(\zeta_p^i\wt{\xi}_m)=0$ for $i=0, \dots, p-1$.
We inductively prove that $F(\wt{\xi}_k)$ is divisible by 
$h_s(\wt{\xi}_k)=(\wt{\xi}_k-\zeta_p^0\wt{\xi}_m)\cdots (\wt{\xi}_k-\zeta_p^{s-1}\wt{\xi}_m)$.
Suppose that $F(\wt{\xi}_k)=h_s(\wt{\xi}_k)g(\wt{\xi}_k)$ and $F(\zeta_p^s\wt{\xi}_m)=0$.
Since
$$
h(\zeta_p^{s}\wt{\xi}_m)=(\zeta_p^{s}-\zeta_p^0)\cdots (\zeta_p^{s}-\zeta_p^{s-1})\wt{\xi}_m^{s}
$$
is a non-zero divisor in $\A_{k-1}$ by the assumption,
we have $g(\zeta_p^{s}\wt{\xi}_m)=0$.
Therefore $g(\wt{\xi}_k)$ is divisible by $\wt{\xi}_k-\zeta_p^s\wt{\xi}_m$
and $F(\wt{\xi}_k)$ is divisible by $h_{s+1}(\wt{\xi}_k)$.
As a consequence, $F(\wt{\xi}_k)$ is divisible by $M_k=\wt{\xi}_k^p-\wt{\xi}_m^p$, and
we have ($1_k$).

\smallskip\noindent
%{\bf Proof of ($2_{k-1}$)+($1_{k}$)$\Rightarrow$($2_k$)} We show that ($2_{k-1}$) \& ($1_{k}$)$\Rightarrow$($2_k$). (
c). 
The image $\phi_k(\wt{\xi}_m)$ of $\wt{\xi}_m$
under the map $\phi_k$ is equal to $(\wt{\xi}_m,\dots, \wt{\xi}_m)$
in $(\A_{k-1})^p$, which is a non-zero divisor of $(\A_{k-1})^p$ by the assumption
($2_{k-1}$). Since the map $\phi_k$ is injective by the assumption
($1_{k}$), the image of $\wt{\xi}_m$ in $\A_k$ is a non-zero divisor.
\end{proof}
For $1\leq k\leq m-1$ and ${\widehat\chi}=(\chi_1,\dots,\chi_k) \in (\mu_p)^k$, 
we define homomorphisms 
$$
\psi_{k,\widehat\chi}: A_k \ni f(\wt{\xi}_1,\dots,\wt{\xi}_k)\mapsto 
f(\chi_1 \wt{\xi}_m,\dots,\chi_k\wt{\xi}_m)\in A_0,
$$
and set 
\begin{equation}
\label{eval map}
\psi_k:A_k\ni f%(\wt{\xi}_1,\dots,\wt{\xi}_k) 
\mapsto 
(\psi_{k,\widehat{\chi}}(f) %(\wt{\xi}_1,\dots,\wt{\xi}_m)
)_{\widehat{\chi}\in (\mu_p)^k}\in (A_0)^{p^k}.
\end{equation}
% $\psi_k$ of algebras  by
% \begin{equation}
% \begin{matrix}
% \psi_k:\A_{k} &\to &(\A_0)^{p^{k}}
% \\
% \text{\rotatebox{90}{$\in$}}
% & &
% \text{\rotatebox{90}{$\in$}}
% \\
% f(\wt{\xi}_1, \dots, \wt{\xi}_{k})& \mapsto &(f(\chi_1\wt{\xi}_m, \dots, \chi_{k}\wt{\xi}_{m}))_{\chi}.
% \end{matrix}
% \end{equation}
% ??? Is is correct?  Can we define $\psi_k$ by $\f_k\circ \f_{k-1}\circ 
% \cdots \circ \f_1$ ???

\begin{proposition}
\label{regular seqence for Mm}
\begin{enumerate}
 \item 
The map $\psi_k$ defined in \eqref{eval map} is homogeneous and injective. 
%homomorphism.
\item
The image of the element $M_m$ in $\A_{m-1}$ is a non-zero divisor, i.e.
the map
$\A_{m-1} \to \A_{m-1}$
defined by the multiplication of $M_m$
is an injective homogeneous $\A$-homomorphism of degree $p$.
As a consequence,
the elements $M_1, \dots, M_m$ form a regular sequence of
 $\A[\wt{\xi}_1,\dots, \wt{\xi}_{m-1}, \wt{\xi}_m]$. 
\end{enumerate}
\end{proposition}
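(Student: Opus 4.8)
The plan is to establish (1) first and then read off (2) by computing $\psi_{m-1}(M_m)$ explicitly; the whole point will be that the components of $\psi_{m-1}(M_m)$ are units of $\A$ times a power of $\wt{\xi}_m$, because their product is exactly $R(z)$.

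For (1), homogeneity is immediate: each $\psi_{k,\widehat\chi}$ substitutes the degree-one element $\chi_j\wt{\xi}_m$ for $\wt{\xi}_j$, hence preserves the total $\wt{\xi}$-grading. For injectivity I would induct on $k$. When $k=1$, $\psi_1=\phi_1$, which is injective by Lemma \ref{injectivity and non zero div}(1). For the step, observe that $\psi_k$ factors as the composite
\[
\A_k=\A_{k-1}[\wt{\xi}_k]/(M_k)\xrightarrow{\ \phi_k\ }(\A_{k-1})^p\xrightarrow{\ \psi_{k-1}^{\oplus p}\ }\big((\A_0)^{p^{k-1}}\big)^p=(\A_0)^{p^k},
\]
where $\psi_{k-1}^{\oplus p}$ applies $\psi_{k-1}$ to each of the $p$ factors, and where one identifies $(\mu_p)^{k-1}\times\mu_p$ with $(\mu_p)^k$ by concatenation: $\phi_k$ substitutes $\wt{\xi}_k\mapsto\chi\wt{\xi}_m$, and then $\psi_{k-1}$ substitutes $\wt{\xi}_j\mapsto\chi_j\wt{\xi}_m$ for $j<k$. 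Since $\phi_k$ is injective by Lemma \ref{injectivity and non zero div}(1) and $\psi_{k-1}^{\oplus p}$ is injective by the induction hypothesis, $\psi_k$ is injective. I would also record that each $\psi_{k,\widehat\chi}$ is a ring homomorphism on $\A_k$ — the relations $M_j=\wt{\xi}_j^p-\wt{\xi}_m^p$ are sent to $0$ because $\chi_j^p=1$ — so $\psi_{m-1}$ is an injective homomorphism of $\A_0$-algebras into the product ring $(\A_0)^{p^{m-1}}$.

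For (2), I would compute $\psi_{m-1,\widehat\chi}(M_m)$ for $\widehat\chi=(\chi_1,\dots,\chi_{m-1})\in(\mu_p)^{m-1}$. Substituting $\wt{\xi}_j\mapsto\chi_j\wt{\xi}_m$ ($1\le j\le m-1$) into $M_m=\wt{\xi}_m^p-\big(\sum_{j=1}^m z_j\wt{\xi}_j\big)^p$ yields
\[
\psi_{m-1,\widehat\chi}(M_m)=\Big(1-\big(\chi_1 z_1+\cdots+\chi_{m-1}z_{m-1}+z_m\big)^p\Big)\wt{\xi}_m^p=:c_{\widehat\chi}\,\wt{\xi}_m^p .
\]
Now use the factorization $1-w^p=\prod_{\chi\in\mu_p}(1-\chi w)$ with $w=\chi_1 z_1+\cdots+\chi_{m-1}z_{m-1}+z_m$, together with the fact that $(\chi_1,\dots,\chi_{m-1},\chi_m)\mapsto(\chi_m\chi_1,\dots,\chi_m\chi_{m-1},\chi_m)$ is a bijection of $(\mu_p)^m$: one obtains $\prod_{\widehat\chi\in(\mu_p)^{m-1}}c_{\widehat\chi}=R(z)$ as in \eqref{eq:R(z)}. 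Since $R(z)$ is a unit in $\A$, each factor $c_{\widehat\chi}$ is a unit in $\A$. Hence every component $c_{\widehat\chi}\wt{\xi}_m^p$ of $\psi_{m-1}(M_m)$ is a non-zero divisor of the domain $\A_0=\A[\wt{\xi}_m]$, so multiplication by $\psi_{m-1}(M_m)$ on $(\A_0)^{p^{m-1}}$ is injective. Because $\psi_{m-1}$ is an injective ring homomorphism, multiplication by $M_m$ on $\A_{m-1}$ is injective, and it is homogeneous of degree $p$ since $M_m$ is. Finally, $M_1,\dots,M_{m-1}$ already form a regular sequence and $\A_{m-1}=\A[\wt{\xi}_1,\dots,\wt{\xi}_{m-1},\wt{\xi}_m]/(M_1,\dots,M_{m-1})$, so adjoining the non-zero divisor $M_m$ shows that $M_1,\dots,M_m$ is a regular sequence.

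The crux — and the step I expect to require the most care — is the identity $\prod_{\widehat\chi}c_{\widehat\chi}=R(z)$: this is where the particular polynomial $R(z)$ of \eqref{eq:R(z)} enters, and it explains why inverting exactly $R$ is what makes $M_m$ a non-zero divisor. The remaining ingredients — the well-definedness and injectivity of the maps $\psi_{k,\widehat\chi}$, and the descent of the non-zero-divisor property from the product ring $(\A_0)^{p^{m-1}}$ back to $\A_{m-1}$ — are routine once Lemma \ref{injectivity and non zero div} is in hand.
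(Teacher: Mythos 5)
Your proof is correct and follows essentially the same route as the paper: part (1) by composing the injective maps $\phi_k$ (your inductive factorization $\psi_k=\psi_{k-1}^{\oplus p}\circ\phi_k$ is just the paper's telescoped composite), and part (2) by computing each component of $\psi_{m-1}(M_m)$ as a unit of $\A$ times $\wt{\xi}_m^p$ and descending through the injective ring homomorphism $\psi_{m-1}$. The only cosmetic difference is how the unit-ness of $c_{\widehat\chi}$ is seen — the paper factors each $C_{\widehat\chi}$ into linear forms dividing $R(z)$, while you observe that the product of all the $c_{\widehat\chi}$ equals $R(z)$ itself — and both verifications are equally valid.
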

\begin{proof}
(1) 
Since the map $\psi_k$ is equal to the composite of
\begin{equation}
\label{eval map2}
\A_{k} \xrightarrow{\phi_k} \A_{k-1}^p \xrightarrow{\phi_{k-1}} \cdots
\xrightarrow{\phi_2}
 \A_{1}^{p^{k-1}}  \xrightarrow{\phi_1} \A_{0}^{p^{k}}, 
\end{equation}
%Since all 
and the maps appeared in \eqref{eval map2}
are injective by Proposition \ref{injectivity and non zero div}, 
the composite map is injective.

\smallskip\noindent 
(2)
We consider the image of $\psi_{m-1}(M_m)$ of $M_m$  under the injective map
$$
\psi_{m-1}:\A_{m-1}\to (\A_0)^{p^{m-1}}. 
$$
The $\widehat{\chi}=(\chi_1,\dots,\chi_{m-1})$-component of $\psi_{m-1}(M_m)$
under the homomorphism $\psi_{m-1}$ is equal to $C_{\widehat{\chi}}\wt{\xi}_m^p$, where
\begin{align*}
C_{\widehat{\chi}}&=\big(1-(\chi_1z_1+\cdots +\chi_{m-1}z_{m-1}+z_m)^p\big)
\\
&=\prod_{i=0}^{p-1}\big(1-\zeta_p^{i}(\chi_1z_1+\cdots +\chi_{m-1}z_{m-1}+z_m)\big).
\end{align*}
Since $C_{\widehat{\chi}}$ is an invertible element in $\A$, 
the element $\psi_{m-1}(M_m)$ in $\A_0^{p^{m-1}}$ is a non-zero
divisor. Since the map $\psi_{m-1}$ is injective, 
the image of $M_m$ in $\A_{m-1}$ is a non-zero divisor.
\end{proof}

Let $z^{(0)}=(z_1^{(0)},\dots, z_m^{(0)})$ be a point in 
$\C^m_z$ such that 
$$z_1^{(0)}\cdots z_m^{(0)}R(z_1^{(0)},\dots, z_m^{(0)})\neq 0.$$
The residue field defined by the evaluation map $\A \to \C$ at 
the point $z^{(0)}$ is denoted by $\kappa(z^{(0)})(\simeq \C)$.

\begin{proposition}
\label{reg seq specialization}
\begin{enumerate}
\item
The homogeneous elements $M_1, \dots, M_{m}$ form a regular sequence in 
$\kappa(z^{(0)})[\wt{\xi}_1,\dots, \wt{\xi}_{m-1}, \wt{\xi}_m]$. Especially,
the map
$\A_{m-1}\otimes_\A \kappa(z^{(0)}) \to \A_{m-1}\otimes_\A \kappa(z^{(0)})$
defined by the multiplication of $M_m$
is an injective homogeneous $\kappa(z^{(0)})$-homomorphism of degree $p$.
\item
Let $\big[\A_{m-1}\otimes \kappa(z^{(0)})\big]^{(d)}$ be 
the degree $d$-part of $\A_{m-1}\otimes \kappa(z^{(0)})$.
Then its dimension is equal to $r_{m-1,d}$  
%defined by the power series
in \eqref{gen fn upto m-1}.
\end{enumerate} 
\end{proposition}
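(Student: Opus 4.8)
The plan is to reduce the statement to a flatness/base-change argument built on the structural results already established for $\A_{m-1}$. First I would note that Proposition \ref{injectivity and non zero div} and its consequences show that each $M_k$ ($1\le k\le m-1$) is monic of degree $p$ in $\wt\xi_k$ over the subring generated by the earlier variables, hence remains monic after tensoring with $\kappa(z^{(0)})$; therefore $M_1,\dots,M_{m-1}$ is a regular sequence in $\kappa(z^{(0)})[\wt\xi_1,\dots,\wt\xi_{m-1},\wt\xi_m]$ for the same elementary reason it was one over $\A$, and $\A_{m-1}\otimes_\A\kappa(z^{(0)})\simeq\kappa(z^{(0)})[\wt\xi_m,\wt\xi_1,\dots,\wt\xi_{m-1}]/(M_1,\dots,M_{m-1})$. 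The only substantive point is that $M_m$ acts injectively on this specialized ring. Here I would run the same evaluation-map argument used in the proof of Proposition \ref{regular seqence for Mm}: the map $\psi_{m-1}$ is built from the $\phi_k$, each of which is visibly monic/split, so $\psi_{m-1}\otimes\kappa(z^{(0)})$ is again injective (a free-module section argument that is insensitive to the base ring). The $\widehat\chi$-component of $\psi_{m-1}(M_m)$ is $C_{\widehat\chi}\wt\xi_m^p$ with $C_{\widehat\chi}=\prod_{i=0}^{p-1}(1-\zeta_p^i(\chi_1z_1+\cdots+\chi_{m-1}z_{m-1}+z_m))$; evaluating at $z^{(0)}$ gives a nonzero scalar precisely because $R(z^{(0)})\ne0$ (the factors of $C_{\widehat\chi}$ are among the linear factors of $R(z)$ up to the $\mu_p^m$-action). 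Hence multiplication by $M_m$ is injective on $(\kappa(z^{(0)})[\wt\xi_m])^{p^{m-1}}$, and by injectivity of $\psi_{m-1}\otimes\kappa(z^{(0)})$ it is injective on $\A_{m-1}\otimes_\A\kappa(z^{(0)})$. This gives part (1).

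For part (2), I would argue that the Hilbert function is preserved under the specialization $\A\to\kappa(z^{(0)})$. The cleanest route: Proposition (the one just before this, on $\A_{m-1}$) shows each graded piece $\A_{m-1}^{(d)}$ is a \emph{free} $\A$-module of rank $r_{m-1,d}$, with the generating function $\sum_d r_{m-1,d}t^d=(1-t^p)^{m-1}/(1-t)^m$. Freeness commutes with $-\otimes_\A\kappa(z^{(0)})$, so $\big[\A_{m-1}\otimes\kappa(z^{(0)})\big]^{(d)}$ is a $\kappa(z^{(0)})$-vector space of dimension exactly $r_{m-1,d}$. No further computation is needed; the monomial basis $\{\wt\xi_1^{i_1}\cdots\wt\xi_{m-1}^{i_{m-1}}\wt\xi_m^j\mid 0\le i_\ell\le p-1,\ j\ge0\}$ descends to a basis over $\kappa(z^{(0)})$ by the same $\A_k=\bigoplus_{i=0}^{p-1}\wt\xi_k^i\A_{k-1}$ decomposition.

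The main obstacle I anticipate is bookkeeping rather than conceptual: making sure the evaluation maps $\phi_k$ and $\psi_{m-1}$ are genuinely compatible with the base change $\A\to\kappa(z^{(0)})$ — i.e. that the injectivity proved in Lemma \ref{injectivity and non zero div} was proved in a way that only used that the coefficients $(\zeta_p^s-\zeta_p^0)\cdots(\zeta_p^s-\zeta_p^{s-1})\wt\xi_m^s$ are non-zero divisors, a property that survives specialization because $\wt\xi_m$ is a polynomial variable over $\kappa(z^{(0)})$ and the scalar factors are fixed nonzero complex numbers. Once that compatibility is spelled out, the only place where the hypothesis $R(z^{(0)})\ne0$ (and $z_1^{(0)}\cdots z_m^{(0)}\ne0$, needed merely so that $z^{(0)}$ lies in $\operatorname{Spec}\A$) is used is in checking that each $C_{\widehat\chi}$ specializes to a unit, which is immediate from the factorization of $R(z)$ in \eqref{eq:R(z)} together with its $\mu_p^m$-invariance.
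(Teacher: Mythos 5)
Your proposal is correct and follows essentially the same route as the paper, which for part (1) simply says the argument of Proposition \ref{regular seqence for Mm} carries over to the residue field, and for part (2) invokes the regular-sequence/Hilbert-series computation (equivalently, the freeness of the graded pieces $\A_{m-1}^{(d)}$ that you use). You have merely spelled out the details the paper leaves implicit — in particular that the injectivity of $\psi_{m-1}$ survives specialization and that $R(z^{(0)})\neq 0$ makes each $C_{\widehat\chi}$ a nonzero scalar — and these checks are accurate.
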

\begin{proof}
(1)
We can prove the statement similarly to 
Proposition \ref{regular seqence for Mm}.

\noindent
(2) Since the sequence of homogeneous elements $M_1, \dots, M_{m-1}$ of
degree $p$ is a regular sequence, the generating function of 
$\A_{m-1}\otimes_\A \kappa(z^{(0)})$   is equal to the power series 
in \eqref{gen fn upto m-1}. 
\end{proof}

Before proving Theorem \ref{locally free of exact rank},
we recall the following lemma of ring theory.
\begin{lemma}
\label{injective specialization}
Let $A$ be an integral Noetherian local ring with the maximal ideal $\mathcal M$.
Let $M$ and $N$ be free $A$ modules of finite rank
and $f:M \to N$ be an injective homomorphism. 
Let $\kappa=A/\mathcal M$ be the residue field of $A$.
If the induced map 
$$
f\otimes_A \kappa:M\otimes_A \kappa \to N\otimes_A \kappa
$$
is injective, then the cokernel of $f$ is a free $A$ module.
\end{lemma}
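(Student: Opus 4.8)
The plan is to compute the minimal number of generators of $C:=\coker(f)$ in two ways — over the residue field $\kappa$ and over the generic point of $A$ — and to observe that the two counts coincide, so that a minimal presentation of $C$ has zero kernel. First I would record the short exact sequence $0\to M\xrightarrow{f}N\to C\to 0$ and tensor it with $\kappa$; right-exactness gives $M\otimes_A\kappa\xrightarrow{f\otimes\kappa}N\otimes_A\kappa\to C\otimes_A\kappa\to 0$, and since $f\otimes_A\kappa$ is assumed injective and $M,N$ are free of finite rank, this yields $\dim_\kappa(C\otimes_A\kappa)=\rank N-\rank M=:r$. By Nakayama's lemma, lifting a $\kappa$-basis of $C\otimes_A\kappa=C/\mathcal MC$ to $C$ produces $r$ generators of $C$, hence a surjection $\pi:A^{\oplus r}\twoheadrightarrow C$; set $K:=\ker\pi$, so that $0\to K\to A^{\oplus r}\xrightarrow{\pi}C\to 0$ is exact.

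Next I would localize at the generic point. Since $A$ is an integral domain, localizing at the prime ideal $(0)$ gives the field $\mathrm{Frac}(A)$; localization is exact and $f$ stays injective, so tensoring $0\to M\xrightarrow{f}N\to C\to 0$ with $\mathrm{Frac}(A)$ produces a short exact sequence of $\mathrm{Frac}(A)$-vector spaces and hence $\dim_{\mathrm{Frac}(A)}\big(C\otimes_A\mathrm{Frac}(A)\big)=\rank N-\rank M=r$. Tensoring $0\to K\to A^{\oplus r}\xrightarrow{\pi}C\to 0$ with $\mathrm{Frac}(A)$, the map $\pi\otimes\mathrm{Frac}(A)$ becomes a surjection between $\mathrm{Frac}(A)$-vector spaces both of dimension $r$, hence an isomorphism; therefore $K\otimes_A\mathrm{Frac}(A)=0$. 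Finally, $K$ is a submodule of the free module $A^{\oplus r}$ over the domain $A$, so $K$ is torsion-free and the localization map $K\to K\otimes_A\mathrm{Frac}(A)$ is injective; as its target vanishes, $K=0$, so $\pi$ is an isomorphism and $C=\coker(f)$ is free (of rank $\rank N-\rank M$).

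Being a standard fact of commutative algebra, there is no genuine obstacle here; the only point to watch is that the two dimension counts must match, which is exactly where the hypotheses enter — injectivity of $f\otimes_A\kappa$ together with freeness of $M,N$ for the count over $\kappa$, and integrality of $A$ (so that the generic fibre is a field and $f$ remains injective on it) for the count over $\mathrm{Frac}(A)$. One could also bypass integrality: the long exact $\mathrm{Tor}$-sequence attached to $0\to M\to N\to C\to 0$ gives $\mathrm{Tor}_1^A(C,\kappa)=0$ from injectivity of $f\otimes_A\kappa$ and freeness of $N$, and a finitely generated module over a Noetherian local ring with vanishing $\mathrm{Tor}_1^A(-,\kappa)$ is free; but the rank-counting argument above is more self-contained and uses precisely the hypotheses as stated.
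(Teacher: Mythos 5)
Your argument is correct, and every step checks out: the count $\dim_\kappa(C\otimes_A\kappa)=\rank N-\rank M$ from right-exactness plus the assumed injectivity of $f\otimes_A\kappa$, the Nakayama surjection $\pi:A^{\oplus r}\twoheadrightarrow C$, the matching count at the generic point, and the vanishing of $K=\ker\pi$ because it is a torsion-free module with zero generic fibre. However, your route is genuinely different from the paper's. The paper does not compare the special and generic fibres at all: it lifts a $\kappa$-basis of $(N\otimes\kappa)/\mathrm{im}(f\otimes\kappa)$ to elements $v_1,\dots,v_{n-m}$ of $N$, forms $h=f+g:M\oplus A^{n-m}\to N$, observes that $h$ is surjective by Nakayama (since $h\otimes\kappa$ is an isomorphism), and then invokes the fact that a surjective endomorphism of a finitely generated module over a Noetherian ring is injective to conclude that $h$ is an isomorphism, whence $N\cong f(M)\oplus A^{n-m}$ and $\coker(f)\cong A^{n-m}$. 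The trade-off is instructive: the paper's argument never uses the integrality of $A$ nor even the injectivity of $f$ itself (both are recovered as consequences of $h$ being an isomorphism), whereas your rank-counting argument leans on both hypotheses — integrality to have a fraction field and torsion-freeness of $K\subset A^{\oplus r}$, and injectivity of $f$ to get the generic-fibre dimension right. Your closing remark about $\mathrm{Tor}_1^A(C,\kappa)=0$ is the cleanest way to drop integrality and is essentially equivalent to the paper's splitting argument; either version suffices for the application in the paper, where $\A$ localized at a point of $U=\{R\neq 0\}$ is indeed a Noetherian local domain.
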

\begin{proof}
The lemma is proved by Nakayama's lemma and the fact that 
a self surjective homomorphism
of a Noetherian module is an isomorphism.
%We set $\rk(N)=n, \rk(M)=m$. 
Let $m$ and $n$ be the rank of $M$ and $N$, and  
% and let $\overline{v}_1, \dots, \overline{v}_{n-m}$
% be a basis of $\coker(f\otimes_{\A}\kappa)$. 
choose elements
$v_1, \dots, v_{n-m}$ in $N$ whose images 
under the projection form a basis of the vector space 
$(N\otimes \kappa)/(f(M)\otimes \kappa)$.
We consider a homomorphism $g:A^{n-m}\to N$ by setting
$g(a_1, \dots, a_{n-m})=\sum a_iv_i$. By Nakayama's Lemma,
the homomorphism $h:A^n\simeq M\otimes A^{n-m} \xrightarrow{f+g}N
\simeq A^n$ is a self surjective homomorphism of the Noetherian module
$A^n$. The homomorphism $h$ is an isomorphism.
In fact the increasing sequence of submodules $K_n=\{\ker(h^n)\}$
stabilizes for a certain $N$, i.e. $K_N=K_{N+1}$.
Let $a$ be in $\ker(h)$. Then there exists an element $b\in K_{N+1}$
such that $h^{N}(b)=a$. Thus $b$ is in $K_{N+1}$ and 
as a consequence $b$ is in $K_N$.
This implies $a=0$. Hence, $h$ is injective and surjective.
Therefore the cokernel of $f$ is isomorphic to $A^{n-m}$. 
\end{proof}
\begin{proof}
[Proof of Theorem \ref{locally free of exact rank}]
The homomorphism
\begin{equation}
\label{graded global}
\A_{m-1}^{(k)}\to \A_{m-1}^{(k+p)} 
\end{equation}
obtained by the multiplication of $M_m$ is an injective 
homomorphism between  free $\A$ modules of finite rank 
by Proposition \ref{regular seqence for Mm}.
We obtain the homomorphism  
$$
\big[\A_{m-1}\otimes \kappa(z^{(0)})\big]^{(k)}
\to \big[\A_{m-1}\otimes \kappa(z^{(0)})\big]^{(k+p)}
$$
by tensoring with  $\kappa(z^{(0)})$. 
It is also injective for any $z^{(0)} \in Z$ 
% $Z=\{z\in \C_z^m\mid \Delta(z)\ne0\}$ 
by Proposition \ref{reg seq specialization}.
Thus by Lemma \ref{injective specialization}\red{,}
the cokernel of \eqref{graded global} is locally free.
%The generating function of 

The quotient ring $\C[z^\pm,\frac{1}{R},\wt\xi]/\wt{\mathcal I}^*$
is isomorphic to the cokernel of $\A_{m-1}\to \A_{m-1}$
obtained by the multiplication map $M_m$.
Since its generating function is
$$
\dfrac{(1-t^p)^{m-1}}{(1-t)^m}-t^p
\dfrac{(1-t^p)^{m-1}}{(1-t)^m}=(\sum_{i=0}^{p-1}t^i)^m,
$$
$\C[z^\pm,\frac{1}{R},\wt\xi]/\wt{\mathcal I}^*$
is a locally free $\A$-module
of rank $p^m$.
\end{proof}

\begin{proof}[Proof of Theorem \ref{th:rank}]
% As in \cite[Definition~1.4.8]{SST},  
% the rank of $\mathfrak I$ is defined by
% \[
% \rank\,(\mathfrak I)=\dim_{\C(z)} \big(
% \C(z)[\xi]\,/\, (\C(z)[\xi]\cdot \init_{(0,1)}\,(\mathfrak I))\big),
% \]
% where the quotient space in its right hand side is regarded as 
% a vector space over the rational function field $\C(z)$.
It is known that $\rank\,(\mathfrak I)$ coincides with the rank of $\cF_C^{p,m}(a,B)$. 
By Proposition \ref{same rank by covering}, we have 
$\rank\,(\mathfrak I)=\rank\,(\wt{\mathfrak I})$.
Since 
$\wt{L_1}\ldots, \wt{L_m}\in \init\,(\wt{\mathfrak I})$, and
$$
\C(z)[\wt\xi]\cdot \wt{\mathcal I}=\sum_{i=1}^m\C(z)[\wt\xi]\wt{L_i}
\subset \C(z)[\wt\xi]\cdot\init\,(\wt{\mathfrak I})
$$
as ideals in $\C(z)[\xi]$, we have 
\[
\rank\,(\mathfrak I)=\rank\,(\wt{\mathfrak I}) \le \dim_{\C(z)} (\C(z)[\wt\xi]\,/\, \wt{\mathcal I}\cdot \C(z))=p^m.
\]
Since there exist linearly independent $p^m$ solutions to
$\cF_C^{p,m}(a,B)$ under the conditions  
\eqref{eq:non-integral} and \eqref{eq:non-integral-addition}
by Proposition \ref{prop:fund-solutions}, 
we have $p^m\leq \rank\,(\mathfrak I)$. Thus we have
$\rank(\mathfrak I)=p^m$.
\end{proof}

% \red{We need non-integral conditions on parameters so that 
% there are $p^m$ linearly independent solutions. 
% Can we prove $\rank(\mathfrak I)=p^m$ for any parameters ??? }

% \begin{definition}
% The characteristic variety of 
% %$\mathfrak I=\la \ell_1,\dots,\ell_m\ra$ 
% $\wt{\mathfrak I}=\la \wt{\ell_1},\dots,\wt{\ell_m}\ra$
% is defined by 
% $$
% \ch(\wt{\mathfrak I}) = \{(z,\wt{\xi})\in 
% \C^{2m}\mid \wt{L}(z,\wt{\xi})=0\  \textrm{ for any }
%  \wt{L}\in \init(\wt{\mathfrak I})\}.
% %\red{\quad???\ \init_{(0,1)}-> \init ??? }
% $$  
% The singular locus of $\phi^*(\cF_C^{p,m}(a,B))$ is defined by 
% $$
% \sing\,(\wt{\mathfrak I}) = 
% \pr_z (\ch(\wt{\mathfrak I})\setminus \{(z,\zero_m)
% \in \C^{2m}\mid z\in \C^m\}),
% $$
% where %$\zero_m=(0,\dots,0)$ and 
% $\pr_z:(z_1,\dots,z_m,\wt{\xi}_1,\dots,\wt{\xi}_m)\mapsto (z_1,\dots,z_m).$
% \end{definition}
We consider Fact\ \ref{fact:sing-loc} for the 
left ideal $\wt{\mathfrak I}=\la \wt{\ell_1},\dots,\wt{\ell_m}\ra$ to
study the singular locus $\sing(\wt{\mathfrak I})$ in 
$(\C_x^{\times})^m$.
\begin{theorem}
The singular locus $\sing(\wt{\mathfrak I})$ of the left ideal $\wt{\mathfrak I}$ 
is contained in $S(z)$.
%and we set $Z=\C^m-V(\Delta)$.
\end{theorem}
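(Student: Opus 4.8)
The plan is to show that $\mathrm{Sing}(\wt{\mathfrak I})$ is contained in the projection to $z$-space of the part of the characteristic variety $\mathrm{Ch}(\wt{\mathfrak I})$ lying away from the zero section, and then to locate that projection using the explicit principal symbols $\wt L_k$. Recall from Fact \ref{fact:sing-loc} that $\mathrm{Sing}(\wt{\mathfrak I})$ is the Zariski closure of $\mathrm{pr}_z\bigl(\mathrm{Ch}(\wt{\mathfrak I})\setminus\{(z,\zero_m)\}\bigr)$, where $\mathrm{Ch}(\wt{\mathfrak I})$ is cut out by the initial ideal $\init(\wt{\mathfrak I})$. Since we are only interested in the singular locus inside $(\C_z^\times)^m$, we may work over $\A=\C[z^\pm,1/R]$ and invert $R(z)$ at the end; the content of the claim is precisely that over the open set $\{z_1\cdots z_m R(z)\neq 0\}$ there is no characteristic point with $\xi\neq 0$.

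First I would observe that $\wt L_1,\dots,\wt L_m\in\init(\wt{\mathfrak I})$, so $\mathrm{Ch}(\wt{\mathfrak I})\subset V(\wt{\mathcal I}^*)$, the common zero locus of $\wt L_1,\dots,\wt L_m$ (equivalently of $M_1,\dots,M_m$) in $\{z_1\cdots z_m\neq 0\}\times\C^m_{\wt\xi}$. Next I would invoke Theorem \ref{locally free of exact rank}: the graded $\A$-algebra $\C[z^\pm,1/R,\wt\xi]/\wt{\mathcal I}^*$ is locally free of rank $p^m$, and in particular finite over $\A$. A graded algebra that is finite over its degree-zero part $\A$ can contain no element of positive degree that is a non-zerodivisor unless... more precisely, finiteness over $\A$ forces the $\wt\xi_i$ to be integral over $\A$, hence nilpotent modulo any maximal ideal of $\A$; concretely, for each $z^{(0)}$ with $z_1^{(0)}\cdots z_m^{(0)}R(z^{(0)})\neq 0$, the fiber ring $\A_{m-1}\otimes_\A\kappa(z^{(0)})$ modulo $(M_m)$ is a finite-dimensional graded $\kappa(z^{(0)})$-algebra, so its irrelevant ideal $(\wt\xi_1,\dots,\wt\xi_m)$ is nilpotent in it. Therefore the only point of $V(M_1,\dots,M_m)$ lying over such a $z^{(0)}$ is $(z^{(0)},\zero_m)$. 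Hence $\mathrm{Ch}(\wt{\mathfrak I})\setminus\{(z,\zero_m)\}$ has empty intersection with $\{z_1\cdots z_m R(z)\neq 0\}\times\C^m_{\wt\xi}$, which is exactly the statement that $\mathrm{Sing}(\wt{\mathfrak I})\cap(\C_z^\times)^m\subset\{R(z)=0\}$, and combined with the boundary locus $z_1\cdots z_m=0$ this gives $\mathrm{Sing}(\wt{\mathfrak I})\subset S(z)$.

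The step I expect to be the main obstacle is making rigorous the passage "$\C[z^\pm,1/R,\wt\xi]/\wt{\mathcal I}^*$ is finite over $\A$ $\Rightarrow$ the fiberwise zero locus of $\wt{\mathcal I}^*$ is just the origin in each $\wt\xi$-fiber." The cleanest route is the one already set up in the previous subsection: by Proposition \ref{reg seq specialization}, $M_1,\dots,M_m$ specialize to a regular sequence of homogeneous elements (each of degree $p$) in $\kappa(z^{(0)})[\wt\xi_1,\dots,\wt\xi_m]$, so the quotient $\kappa(z^{(0)})[\wt\xi]/(M_1,\dots,M_m)$ is a graded Artinian ring (a complete intersection of dimension zero), whence its only homogeneous prime is the irrelevant ideal, and its only closed point is $\wt\xi=\zero_m$. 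This makes the fiber computation transparent and avoids any subtle commutative-algebra detour. One should also take a moment to note that $\init(\wt{\mathfrak I})$ may strictly contain $\wt{\mathcal I}^*$ (as remarked in the paper following \cite{HT}), so the containment $\mathrm{Ch}(\wt{\mathfrak I})\subset V(\wt{\mathcal I}^*)$ goes only in the direction needed here; this is why the theorem asserts only $\mathrm{Sing}(\wt{\mathfrak I})\subset S(z)$ and the reverse inclusion is proved separately using the $p^m$ explicit solutions from Proposition \ref{prop:fund-solutions}.
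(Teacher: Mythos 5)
Your proposal is correct and follows essentially the same route as the paper: one uses $\wt{\mathcal I}^*\subset \init(\wt{\mathfrak I})$ to bound the characteristic variety by the zero locus of $\wt L_1,\dots,\wt L_m$ (equivalently $M_1,\dots,M_m$), and then Proposition \ref{reg seq specialization} to see that over any $z^{(0)}$ with $z_1^{(0)}\cdots z_m^{(0)}R(z^{(0)})\neq 0$ this zero locus is the origin of the $\wt\xi$-fiber, so the projection away from the zero section misses $\{R\neq 0\}$. Your "cleanest route" via the specialized regular sequence and the graded Artinian fiber ring is exactly the argument the paper gives (citing Propositions \ref{regular seqence for Mm} and \ref{reg seq specialization}), and your remark on the one-directional containment matches the paper's own caveat.
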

\begin{proof}
We set 
$$
\ch' = \{(z,\xi)\in (\C_z^m-S(z))\times \C^{m}\mid \wt{L}(z,\wt\xi)=0\  \textrm{ for any }
 \wt{L}\in \wt{\mathcal I}^*\}.
$$  
Since
$$
\wt{\mathcal{I}}^*\subset \init(\wt{\mathfrak{I}})\otimes_{\C[z,\wt\xi]}
\C[z^\pm,\frac{1}{R},\wt\xi],
$$
we have 
$$
\ch'\supset \ch(\wt{\mathfrak I})\cap (\C_z^m-S(z))\times \C^{m}. 
$$
By Proposition \ref{regular seqence for Mm}
and Proposition \ref{reg seq specialization}, we have
\begin{align*}
\emptyset=&\pr_z (\ch'\setminus \{(z,\zero_m)
\in (\C_z^m-S(z))\times \C^{m}\mid z\in Z\})
\\
&\supset \sing(\wt{\mathfrak I})\cap (\C_z^m-S(z)).  
\end{align*}
Therefore we have $\sing(\wt{\mathfrak I})\subset S(z)$.
\end{proof}
%Thus we have the following corollary.
%\begin{cor}
%The singular locus of the left ideal $\mathfrak I$ 
%is contained in $S(x)$.
%and we set $Z=\C^m-V(\Delta)$.
% \end{cor}

\begin{proof}[Proof of Theorem \ref{th:sing-loc}]
Let  $\f^*\cF_C^{p,m}(a,B)$ be the system of differential equation
defined by $\wt{\mathfrak I}$. Since the spaces of solutions to
$\cF_C^{p,m}(a,B)$ and $\f^*\cF_C^{p,m}(a,B)$ have non-trivial 
monodromy representations along $\{x_i=0\}$ and along $\{z_i=0\}$ for $1\leq i\leq m$,
respectively, we have
$$
\{x_1\cdots x_m=0\}\subset \sing(\mathfrak I),\quad
\{z_1\cdots z_m=0\}\subset\sing(\wt{\mathfrak I}).
$$ 
Moreover, since the morphism $(\C_z^{\times})^m \to (\C_x^{\times})^m$
is unramified, the horizontal morphisms of the following  
diagram are isomorphisms:
$$
\begin{matrix}
\sing(\mathfrak I)\cap (\C_z^{\times})^m
&\xrightarrow{\simeq}&\f^{-1}(\sing(\wt{\mathfrak I})\cap (\C_x^{\times})^m)
\\
\cap & & 
\\
S(z)\cap (\C_z^{\times})^m
&\xrightarrow{\simeq}&\f^{-1}(S(x)\cap (\C_x^{\times})^m).
\end{matrix}
$$
Therefore to prove Theorem \ref{th:sing-loc}, it is enough 
to show that the inclusion
\begin{equation}
\label{inclusion is equal}
\sing(\wt{\mathfrak I})\cap (\C_x^{\times})^m\subset
S(x)\cap (\C_x^{\times})^m
\end{equation}
is actually an equality.
Since $S(x)\cap (\C_x^{\times})^m$ is an
irreducible divisor, 
if the inclusion \eqref{inclusion is equal} is a
proper inclusion, the codimension of
$\sing(\wt{\mathfrak I})\cap (\C_x^{\times})^m$
is greater than or equal to $2$.

%Since $\sing(\wt{\mathfrak I})\cap \C_x^{\times m}$

Therefore, the monodromy representation of
$\pi_1((\C_x^{\times})^m-\sing(\wt{\mathfrak I}),\dot{x})$
for the system 
$\cF_C^{p,m}(a,B)$ factors through that of $\pi_1((\C_x^{\times})^m,\dot{x})$
via the isomorphism
$$
\pi_1((\C_x^{\times})^m-\sing(\wt{\mathfrak I}),\dot{x})\xrightarrow{\simeq}
\pi_1((\C_x^{\times})^m,\dot{x})
$$
for a base point $\dot{x}$. Since the differential equation
is has only regular singularities, the space of solutions is generated
by functions of the form $\prod_ix_i^{\beta_i}$. This contradicts to the series
expressions of solutions.
Thus we have Theorem \ref{th:sing-loc}.
\end{proof}
\begin{remark}
Though the ideal $\mathfrak I$ is generated by $\ell_1,\dots,\ell_m$,  
we cannot see whether $\init(\mathfrak I)$ is generated by 
$\init^m(\ell_1),\dots,\init^m(\ell_m)$ or not.   
If $\init(\mathfrak I)$ is generated by 
$\init^m(\ell_1),\dots,\init^m(\ell_m)$, then  
$\init\,(\mathfrak I)$ is independent of the parameters $a,B$,  
and we can remove the conditions 
\eqref{eq:non-integral} and \eqref{eq:non-integral-addition} 
in Theorems \ref{th:rank} and \ref{th:sing-loc}.
\end{remark}

\end{document}